\documentclass[11pt,a4paper,fleqn]{scrartcl}    

\usepackage[english]{babel}
\usepackage{amsmath,amsfonts, amsthm,xcolor,amssymb}
\numberwithin{equation}{section}
\usepackage{mathtools, mathabx}
\usepackage[colorlinks=true,linkcolor=blue,citecolor=blue]{hyperref}
\usepackage{graphicx}
\usepackage{comment}
\usepackage[hyperpageref]{backref}

%\usepackage[pagewise,displaymath]{lineno}
%\linenumbers
\usepackage{xcolor}

\newtheorem{thm}{Theorem}[section]
\newtheorem{lem}[thm]{Lemma}

\newtheorem{prop}[thm]{Proposition}

\newtheorem{defn}[thm]{Definition}
\theoremstyle{definition}
\newtheorem{rem}[thm]{Remark}
\theoremstyle{remark}

\newcommand{\ds}{\displaystyle}
\newcommand{\norm}[1]{\left\Vert#1\right\Vert}

\newcommand{\abs}[1]{\left\vert#1\right\vert}
\newcommand{\set}[1]{\left\{#1\right\}}
\newcommand{\R}{\mathbb{R}}
\newcommand{\N}{\mathbb{N}}

\newcommand{\de}{\partial}
\newcommand{\eps}{\varepsilon}

\DeclareMathOperator{\dive}{div}
\DeclareMathOperator{\tr}{Tr}

\DeclareMathOperator{\spt}{spt}

\newcommand\restr[2]{{% we make the whole thing an ordinary symbol
  \left.\kern-\nulldelimiterspace % automatically resize the bar with \right
  #1 % the function
  \vphantom{ |} % pretend it's a little taller at normal size
  \right|_{#2} % this is the delimiter
  }}
%\patchcmd{\abstract}{\scshape\abstractname}{\textbf{\abstractname}}{}{}
\makeatletter %note a di pagina senza numero 1
\def\@makefnmark{} %note a di pagina senza numero 2
\makeatother %note a di pagina senza numero 3
\title{On the first Robin eigenvalue of the Finsler $p$-Laplace operator as $p\to 1$}

\author{Rosa Barbato$^*$, Francesco Della Pietra$^{*}$, Gianpaolo Piscitelli$^{*}$ 
\thanks{Dipartimento di Matematica e Applicazioni ``R. Caccioppoli'', Universit\`a degli studi di Napoli Federico II, Via Cintia, Monte S. Angelo - 80126 Napoli, Italia.  \newline 
Email: f.dellapietra@unina.it (\textit{corresponding author}), gianpaolo.piscitelli@unina.it}
}

\date{}

\begin{document}
\maketitle

\begin{abstract}
 \textbf{Abstract.}
Let $\Omega$ be a bounded, connected, sufficiently smooth open set, $p>1$ and $\beta\in\R$. In this paper, we study the $\Gamma$-convergence, as $p\rightarrow 1^+$, of the functional 
$$J_p(\varphi)=\dfrac{\ds \int_\Omega F^p(\nabla \varphi)dx+\beta\ds\int_{\partial \Omega} \abs{\varphi}^pF(\nu)d\mathcal{H}^{N-1}}{\ds\int_\Omega \abs{\varphi}^pdx}$$
where $\varphi\in W^{1,p}(\Omega)\setminus\{0\}$ and $F$ is a sufficientely smooth norm on $\R^n$. We study the limit of the first eigenvalue $\lambda_1(\Omega,p,\beta)=\inf_{\substack{\varphi\in W^{1,p}(\Omega)\\ \varphi \ne0}}J_p(\varphi)$, as $p\to 1^+$, that
is: 
\begin{equation*}
\Lambda(\Omega,\beta)=\inf_{\substack{\varphi \in BV(\Omega)\\ \varphi\not\equiv 0}}\dfrac{\abs{Du}_F(\Omega)+\min\{\beta,1\}\ds \int_{\partial \Omega}\abs{\varphi}F(\nu)d\mathcal H^{N-1}}{\ds\int_\Omega \abs{\varphi}dx}.
\end{equation*}
Furthermore, for $\beta>-1$, we obtain an isoperimetric inequality for $\Lambda(\Omega,\beta)$ depending on $\beta$.

The proof uses an interior approximation result for $BV(\Omega)$ functions by $C^\infty(\Omega)$ functions in the sense of strict convergence on $\R^n$ and a trace inequality in $BV$ with respect to the anisotropic total variation.

\noindent\textbf{MSC 2020:} 28A75, 35J25, 35P15. \\
\textbf{Keywords and phrases}: Finsler $p$-Laplace eigenvalues; $\Gamma$-convergence; Isoperimetric inequalities; Trace inequalities; Strict interior approximation.
\end{abstract}

\begin{center}
\begin{minipage}{11cm}
\small
\tableofcontents
\end{minipage}
\end{center}

\section{Introduction}
Let $\Omega$ be a bounded, connected, sufficiently smooth open set, $p>1$ and $\beta\in\R$. In this paper, we study the asympthotic behaviour, as $p\to 1^+$, of the following minimum problem
\begin{equation}
    \label{lambda_intro}
\lambda_1(\Omega,p,\beta)=\inf_{\substack{\varphi\in W^{1,p}(\Omega)\\ \varphi \ne0}} J_p(\varphi)
\end{equation}
where
\begin{equation}
\label{functional_Jp_intro}
J_p(\varphi)=\dfrac{\ds \int_\Omega F^p(\nabla \varphi)dx+\beta\ds\int_{\partial \Omega} \abs{\varphi}^pF(\nu)d\mathcal{H}^{N-1}}{\ds\int_\Omega \abs{\varphi}^pdx},
\end{equation}
$\nu$ is the outer normal to $\partial\Omega$ and $F$ is a sufficiently smooth norm on $\R^n$. 
If $u\in W^{1,p}(\Omega)$ is a minimizer of \eqref{lambda_intro}, then it solves the following Robin eigenvalue problem
\begin{equation*}
\begin{cases}
-\mathcal Q_p u=\lambda_1(\Omega,p,\beta)\abs{u}^{p-2}u \;&\textrm{in}\;\Omega\\
F^{p-1}(\nabla u) F_\xi(\nabla u)\cdot \nu +\beta F(\nu)\abs{u}^{p-2}u=0\;&\textrm{on}\;\partial \Omega,
    \end{cases}
\end{equation*}
where $\mathcal Q_p u$ is the anisotropic $p-$Laplace operator 
\[
\mathcal Q_p u:= \dive \left(\frac{1}{p}\nabla_{\xi}[F^{p}](\nabla u)\right).
\] 
From the point of view of finding optimal domains for $\lambda_1(\Omega,p\beta)$, there is a significant difference from the case of $\beta>0$ to the case $\beta<0$. It is  known that the optimal shape with a volume constraint for \eqref{lambda_intro} depends on the sign of $\beta$. For positive values of the Robin parameter, the so-called Wulff shape (see Section \ref{notation_sec} for details) is a minimizer \cite{fragavitone}:
\[
\lambda_1(\Omega,p,\beta)\ge \lambda_1(\mathcal{W},p,\beta)\quad\text{where }|\mathcal{W}|=|\Omega|.
\]
If $\beta<0$, the problem is not completely solved, even in the Euclidean case (that is when $F(\xi)=\sqrt{\sum_i \xi_i^2}$). Indeed, in 1977 Bareket \cite{bareket1977isoperimetric} conjectured that, the first eigenvalue is maximized by a ball in the class of the smooth bounded domains of given volume. In \cite{ferone2015conjectured} it has been showed that it is true for domains close, in certain sense, to a ball. Subsequently, the authors in \cite{freitas2015first} (see \cite{kovavrik2017p} for the $p$-Laplacian case) have disproved the conjecture for $\left|\beta\right|$ large enough and have showed that it is true for small values of $\left|\beta\right|$ in suitable class of domains. In the Finsler setting, this problem has been addressed in \cite{paoli2019two}.

Our final aim is to obtain optimal shapes for the limiting functional of \eqref{lambda_intro}, as $p\to 1^+$. To do that, we first study the limit of $\lambda_1(\Omega,p\beta)$. In particular, we prove that when $\beta>-1$, the functional $J_p$, defined in \eqref{functional_Jp_intro}, $\Gamma-$converges, as $p\to 1^+$, to  
\[
J(\varphi)=\dfrac{\ds \abs{D\varphi}_F(\Omega)+\min\{1,\beta\}\ds\int_{\partial \Omega} \abs{\varphi}F(\nu)d\mathcal{H}^{N-1}}{\ds\int_\Omega \abs{\varphi}dx},
\]
where $|D\varphi|_F(\Omega)$ is the anisotropic total variation of $\varphi$ (see Section \ref{notation_sec} for the precise definition). This will imply that
\begin{equation}
    \label{Lambda_intro}
\lim_{p\to 1^+}\lambda_1(\Omega,p,\beta)= \Lambda(\beta,\Omega):=\inf_{\varphi\in BV(\Omega)}J(\varphi).
\end{equation}

Then, we prove an isoperimetric inequality for $\Lambda(\Omega,\beta)$. In particular, we obtain that keeping the volume of $\Omega$ fixed, the Wulff shape minimises $\Lambda(\Omega,\beta)$ when $\beta \ge 0$, and maximises it when $-1<\beta < 0$. 

The proof of the convergence result, and then of the two isoperimetric inequalities, relies on two results on the anisotropic total variation, which are also of independent interest. The first one is a trace inequality in the $BV$ space:
\begin{equation}
    \int_{\partial \Omega}\abs{u} F(\nu)\, d\mathcal{H}^{N-1}\leq c_1\abs{Du}_F(\Omega)+c_2\int_{\Omega}\abs{u}dx,\quad \forall u\in BV(\Omega),
\end{equation}
where $c_1$ and $c_2$ are two constants which depend on the geometry of the domain. 
This inequality has been revealed very useful in capillarity problems, and it has been studied for example in \cite{anzellotti1978funzioni,gerhardt,giusti1981equilibrium}.

The second key result is an interior approximation for $BV$ functions by smooth functions with compact support. It is well known that if $\Omega$ is an open set, then the total variation of a function $u\in BV(\Omega)$ can be approximated with the corresponding total variation of a sequence in $C^\infty(\Omega)$. Actually, an analogous result is not true in general if one need to axpproximate $|Du|$ with a sequence of $C^\infty$ function with compact support in $\Omega$. In order to do that, more regularity is needed on $\Omega$. In the Euclidean setting, this problem has been addressed in \cite{littig2014,schmidt2015strict}. In this paper, we show that for any $u\in BV(\Omega)\cap L^p(\Omega)$, for some $p\in [1,\infty)$, there exists a sequence $\{u_k\}_{k\in\mathbb N}\subseteq C_0^\infty(\Omega)$ such that, for any $q\in [1,p]$,
\[
u_k\to u\ \ \text{in}\ \ L^q(\Omega)\quad\text{and}\quad |D u_k|_F(\mathbb R^N)\to |D u|_F(\mathbb R^N).
\]

We finally stress that the problem we deal with is strictly related to capillarity problems. We refer the reader, for example, to \cite{gerhardt,giusti1981equilibrium} for the Euclidean case and to \cite{philippis2015regularity} for the anisotropic case.

The structure of the paper is the following. In Section \ref{notation_sec}, we review some useful tools on the Finsler norm, the anisotropic curvature and functions of bounded variation. In Section \ref{trace_sec} we prove the anisotropic trace inequality for general domains, and for smooth domains. In Section \ref{approx_sec}, we give the strict approximation result and finally, in Section \ref{eig_sec} we prove the $\Gamma-$convergence results and the isoperimetric inequality for $\Lambda(\Omega,\beta)$.

\section{Notation and preliminaries}
\label{notation_sec}
In this Section we give several definitions and properties related the Finsler norm. In particular, we review some basic facts on the anisotropic total variation of a $BV$ function, and on the anisotropic curvatures. 

%Throughout this paper, we denote by $\Omega$ a bounded open set with Lipschitz boundary and by $B_r(x_0)$ the ball of radius $r$ with center $x_0$. 
\subsection{The Finsler norm}
Throughout the paper we will assume that $F$ is a convex, even, $1-$homogeneous function 
\[
\xi\in \R^{N}\mapsto F(\xi)\in [0,+\infty[,
\] 
such that
\begin{equation}
\label{eq:omo}
F(t\xi)=|t|F(\xi), \quad t\in \R,\,\xi \in \R^{N}, 
\end{equation}
 and such that
\begin{equation}
\label{eq:lin}
a|\xi| \le F(\xi),\quad \xi \in \R^{N},
\end{equation}
for some constant $a>0$. It is easily seen that this hypothesis assure the existence of a positive constant $b\ge a$ such that
\[
F(\xi)\le b |\xi|,\quad \xi \in \R^{N}.
\]
Throughout the paper, we will also assume that $F$ belongs to $C^{2}(\mathbb{R}^N\setminus \{0\})$ and that
\begin{equation}
\label{strong}
\nabla^{2}_{\xi}[F^{2}](\xi)\text{ is positive definite in }\R^{N}\setminus\{0\}.
\end{equation}

The assumption \eqref{strong} on $F$ ensures that the operator 
\[
\mathcal Q_p u:= \dive \left(\frac{1}{p}\nabla_{\xi}[F^{p}](\nabla u)\right)
\] 
is elliptic, therefore there exists a positive constant $\gamma$ such that
\begin{equation*}
\sum_{i,j=1}^{n}{\nabla^{2}_{\xi_{i}\xi_{j}}[F^{p}](\eta)
  \xi_i\xi_j}\ge
\gamma |\eta|^{p-2} |\xi|^2 \qquad\forall\eta\in\R^N\setminus\{0\}, \  \forall\xi\in\R^N. \end{equation*}
\begin{comment}
On the other hand, for $p\ge 2$, the condition  
\begin{equation*}
\nabla^{2}_{\xi}[F^{2}](\xi)\text{ is positive definite in }\R^{N}\setminus\{0\},
\end{equation*}
is sufficient for the validity of \eqref{strong}.
\end{comment}

The polar function $F^o\colon\R^N \rightarrow [0,+\infty[$ of $F$ is
\begin{equation*}
F^o(v)=\sup_{\xi \ne 0} \frac{ \xi\cdot v}{F(\xi)}. 
\end{equation*}
It is easily seen that also $F^o$ is a convex function satisfying the properties \eqref{eq:omo} and
\eqref{eq:lin}. Furthermore, we have
\begin{equation*}
F(v)=\sup_{\xi \ne 0} \frac{ \xi\cdot v}{F^o(\xi)},
\end{equation*}
and from this follows that
\begin{equation}\label{prodscal}
| \xi\cdot \eta | \le F(\xi) F^{o}(\eta) \qquad \forall \xi, \eta \in \R^{N}.
\end{equation}
The Wulff shape centered at the origin is the set denoted by
\[
\mathcal W = \{  \xi \in \R^N \colon F^o(\xi)< 1 \}.
\]

We denote $\kappa_N=|\mathcal W|$, where $|\mathcal W|$ is the Lebesgue measure
of $\mathcal W$. More generally, the set $\mathcal W_r(x_0)$ indicates $r\mathcal W+x_0$, that is the Wulff shape centered at $x_0$ with measure $\kappa_Nr^N$. If no ambiguity occurs, we will write $\mathcal W_r$ instead of $\mathcal W_r(0)$. 

The functions $F$ and $F^o$ enjoy the following properties:
\begin{align}
& F_{\xi}(\xi) \cdot \xi = F(\xi), \quad  F_{\xi}^{o} (\xi)\cdot \xi 
= F^{o}(\xi) &\forall \xi \in
\R^N\setminus \{0\},\\
& F(F_{\xi}^o(\xi))=F^o( F_{\xi}(\xi))=1 &\forall \xi \in
\R^N\setminus \{0\}, \label{FF0xi}
\\
& F^o(\xi)  F_{\xi}(F_{\xi}^o(\xi) ) = F(\xi) 
F_{\xi}^o( F_{\xi}(\xi) ) = \xi &\forall \xi \in
\R^N\setminus \{0\},
\end{align}
where $F_{\xi}=\nabla F(\xi)$. 

Given a bounded domain $\Omega$, the anisotropic distance of $x\in\overline\Omega$ to $\de\Omega$ is defined as
\begin{equation*}
d_{F}(x):= \inf_{y\in \partial \Omega} F^o(x-y), \quad x\in \overline\Omega.
\end{equation*}

We highlight that, when $F(\xi)=\sqrt{\sum_i \xi_i^2}$, then $d_F=d_{\mathcal{E}}$ is the Euclidean distance function from the boundary.

The function $d_{F}$ is a uniform Lipschitz function in $\overline \Omega$, and 
\begin{equation*}
  F(\nabla d_F(x))=1 \quad\text{a.e. in }\Omega.
\end{equation*}
We have that $d_F\in W_{0}^{1,\infty}(\Omega)$. Many properties of the anisotropic distance function are studied in \cite{crasta2007distance}.

Finally, the anisotropic inradius of $\Omega$ is 
\begin{equation*}
R^{F}(\Omega)=\max \{d_{F}(x),\; x\in\overline\Omega\},
\end{equation*}
that is the radius of the largest Wulff shape $\mathcal W_{r}(x)$ contained in $\Omega$.

\subsection{Anisotropic curvatures}
Here we recall some properties of the anisotropic mean curvature, as well as  an integration formula in anisotropic normal coordinates. We refer to \cite{crasta2007distance} for further details.

If $\Omega$ has a $C^2$ boundary, the anisotropic outer normal to $\partial\Omega$ is defined as  
\[
n^F(y)= F_{\xi}(\nu(y)),\qquad y\in \de\Omega,
\]
where $\nu(y)$ is the Euclidean outer normal to $\de \Omega$ at $y$. Moreover, by \eqref{FF0xi} it holds that
\[
F^o(n^F(y))=1.
\]
Let us denote by $T_{y}\de\Omega$ the tangent space to $\de\Omega$ at $y$; the anisotropic Weingarten map is defined as
\[
d n^F\colon T_{y}\de\Omega \to T_{n(y)}\mathcal W.
\]
The eigenvalues $\kappa^{F}_{1}\le \kappa^{F}_{2}\le \ldots \le \kappa_{N-1}^{F}$ of this map are called the anisotropic principal curvatures at $y$ (see also \cite{wang2013sharp}). The anisotropic mean curvature of $\de\Omega$ at a point $y$ is defined as
  \begin{equation*} 
  \mathcal H^{F}(y)= \kappa_{1}^{F}(y)+\ldots \kappa_{N-1}^{F}(y) , \quad y\in
  \de \Omega.
  \end{equation*}
%We say that a domain is anisotropic mean convex if $\mathcal H_{F}(y)\ge 0$ for any $y\in \de\Omega$. 

The anisotropic distance $d_F$ is a $C^2$ function in a tubular neighborhood of $\partial\Omega$; hence we are in position to define the matrix-valued function
\[
W(y)=-F_{\xi\xi}(\nabla d_{F}(y))\nabla^{2}d_F(y), \quad y\in\de\Omega.
\]
Based on this function, it is possible to give a different definition of the anisotropic principal curvatures \cite[Remark 5.9]{crasta2007distance}. Since $W(y)v\in T_y\partial\Omega$, for any $v\in\R^n$, it remains defined the map $\overline W(y)\colon T_{y}\to T_{y}$,  as $\overline W(y)w=W(y)w$, $w\in T_{y}$. The matrix $\overline W(y)$ (that is, in general, non-symmetric) admits the real eigenvalues $\kappa_{1}^{F}(y)\le \kappa_{2}^{F}(y)\le \ldots\le \kappa_{N-1}^{F}(y)$. Actually,  the definition is equivalent to the preceding one. Moreover, it holds that
\[  
\mathcal H^{F}(y)= \dive\left[F_{\xi}\left(-{\nabla d_{F}(y)}\right) \right] = \tr(W(y))
\]
(see also \cite[Sec. 3]{wang2013sharp}).

To state the change of variable formula in anisotropic normal coordinates, we need some preliminary definitions.

Let
\[
\Phi(y,t)= y-tF_{\xi}(\nu(y)),\qquad y\in \de\Omega, \quad t\in \R
\]
and for $y\in \de \Omega$,
\[
\ell(y)=\sup\{d_{F}(z),\, z\in\Omega\text{ and }y\in \Pi(z) )\}
\]
where
\begin{equation}
    \label{proiection}
\Pi(z)=\{\eta\in \de\Omega\colon d_{F}(z)=F^{o}(z-\eta)\}
\end{equation}
is the set of the anisotropic projections of a point $z\in\Omega$ on $\de\Omega$.

Then, we recall the following
\begin{thm} (\cite[Theorem 7.1]{crasta2007distance})\label{change_normal}
For every $h\in L^1(\Omega)$, it holds
\[
\int_\Omega h(x)dx=\int_{\partial\Omega} F(\nu(y))\int_0^{\ell(y)}h(\Phi(y,t))J(y,t)\,dt\,d\mathcal H^{N-1}(y),
\]
where \begin{equation}\label{jacobian}
J(y,t)=\ds\prod_{i=1}^{N-1}(1- t \kappa^{F}_i(y)).
\end{equation}
\end{thm}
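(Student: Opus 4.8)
The plan is to realize the right-hand side as an ordinary change of variables under the anisotropic normal map $\Phi(y,t)=y-tF_\xi(\nu(y))$ and to identify its Jacobian. First I would recall the structural properties of $d_F$ developed in \cite{crasta2007distance}: the map $\Phi$ sends $\{(y,t):y\in\de\Omega,\ 0\le t<\ell(y)\}$ bijectively onto $\Omega\setminus\Sigma$, where $\Sigma$ is the anisotropic cut locus; along each such segment $d_F(\Phi(y,t))=t$ and $\nabla d_F$ is constant; and $\Sigma$ is $\mathcal H^N$-negligible. Since $\Sigma$ carries no Lebesgue mass, it suffices to change variables on $\Omega\setminus\Sigma$ and then extend to all of $L^1(\Omega)$: I would first prove the identity for $h\in C_c(\Omega)$, then pass to arbitrary $h\ge 0$ by monotone convergence and to signed $h$ by splitting into positive and negative parts.

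Next I would compute the Jacobian of $\Phi$ in local coordinates. Parametrizing $\de\Omega$ by $u\mapsto\gamma(u)$, so that $\{\de_{u_i}\gamma\}$ spans $T_y\de\Omega$ and $d\mathcal H^{N-1}=\sqrt{\det g}\,du$, one has $\de_t\Phi=-F_\xi(\nu)=-n^F(y)$, while differentiating $n^F=F_\xi(\nu)$ along $\de\Omega$ gives $\de_{u_i}\Phi=(I-t\,\overline W(y))\de_{u_i}\gamma$, with $\overline W$ the tangential map from Section \ref{notation_sec} whose eigenvalues are $\kappa_1^F,\dots,\kappa_{N-1}^F$. Hence the first $N-1$ columns of $D\Phi$ are tangential, so writing $n^F=(n^F\cdot\nu)\nu+(\text{tangential})$ and discarding the tangential part (it lies in the span of the other columns) together with $n^F\cdot\nu=F_\xi(\nu)\cdot\nu=F(\nu)$ yields
\[
|\det D\Phi|=F(\nu)\,\big|\det\big(I-t\,\overline W(y)\big)\big|\,\sqrt{\det g}=F(\nu)\,J(y,t)\,\sqrt{\det g},
\]
since $\det(I-t\,\overline W)=\prod_{i=1}^{N-1}(1-t\kappa_i^F)=J(y,t)$, and $J(y,t)>0$ for $0\le t<\ell(y)$ because $\ell$ stays below the first focal value $1/\kappa_i^F$. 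Inserting this into the change-of-variables formula on $\Omega\setminus\Sigma$ produces exactly the claimed identity, the factor $F(\nu)$ combining with $\sqrt{\det g}\,du=d\mathcal H^{N-1}$ to give the outer integral $\int_{\de\Omega}F(\nu)\,\cdots\,d\mathcal H^{N-1}$.

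The Jacobian computation is routine linear algebra once the set-up is fixed; the genuine obstacle is the geometric input. One must show that the singular set $\Sigma$ (where $d_F$ fails to be differentiable or the projection $\Pi$ of \eqref{proiection} is not single-valued) is Lebesgue-negligible, that $\Phi$ is injective on $\{0\le t<\ell(y)\}$, and that $d_F\in C^2$ in a one-sided neighbourhood of $\de\Omega$ so that $\overline W$ and the curvatures $\kappa_i^F$ are well defined along each normal segment up to $t=\ell(y)$. These facts rest on the fine analysis of the anisotropic distance function in \cite{crasta2007distance}; granting them, the formula follows as above.
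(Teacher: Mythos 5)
This theorem is stated in the paper without proof, being recalled verbatim from \cite[Theorem 7.1]{crasta2007distance}, and your argument reconstructs essentially the proof of that reference: a change of variables under $\Phi$ on the complement of the $\mathcal H^{N}$-negligible singular set, with Jacobian $F(\nu)\,J(y,t)$ computed from the anisotropic Weingarten map. Your sketch is correct as far as it goes --- including the linear-algebra step that isolates the factor $F(\nu)=F_\xi(\nu)\cdot\nu$ by discarding the tangential part of $n^F$ against the columns $(I-t\,\overline W)\partial_{u_i}\gamma$, and the positivity of $J$ for $0\le t<\ell(y)$ via \cite[Lemma 5.4]{crasta2007distance} --- and the geometric inputs you explicitly defer (negligibility of the cut locus, injectivity of $\Phi$ on $\{0\le t<\ell(y)\}$, regularity of $d_F$ along normal rays) are exactly the content of the cited paper, so deferring them is appropriate here.
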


Since $1- t \kappa^{F}_i(y)>0$ for any $i=1,\ldots,N-1$ (\cite[Lemma 5.4]{crasta2007distance}), $J(y,t)$ is positive. Moreover it holds that
\begin{equation}
\label{laplaciano_cambio}
\frac{-\frac{d}{dt}\left[J(y,t)\right]}{J(y,t)}=\sum_{i=1}^{N-1}\frac{\kappa^{F}_i(y)}{1-t\kappa^{F}_i(y)}.
%\ge \frac{\mathcal H_F(y)}{1-\frac{\mathcal H_F(y)}{N-1} t}\geq 0\qquad\forall y\in\partial\Omega,\; t\in[0,\ell (y)[.
\end{equation}
%The equality in \eqref{laplaciano_cambio} follows by an easy computation%, while the first inequality is obtained as in \cite[Proposition 2.6]{lewis2012geometric}, by a simple application of the Newton inequalities.

Finally, we conclude this section, by recalling that, for any $x\in \Omega$ such that $\Pi(x)=\{y\}$, it holds that (\cite[Lemma 4.3]{crasta2007distance}):
\begin{equation}
\label{dentro_fuori}
\nabla d_F(x)=-\frac{\nu(y)}{F(\nu(y))}.
\end{equation}

\subsection{The anisotropic total variation}
Let $u\in BV(\Omega)$, the total variation of $u$ with respect to $F$ is defined as
\[
|Du|_F(\Omega)=\sup\left\{\int_\Omega u\dive(g)\ dx \ \ : \ g\in C_0^1(\Omega; \R^N), \ F^o(g)\leq 1\right\}
\]
and the perimeter of a set $E$ with respect to $F$ is:
\[
P_F(E;\Omega)=|D\chi_E|_F(\Omega)=\sup\left\{\int_E \dive(g)\ dx \ \ : \ g\in C_0^1(\Omega; \R^N), \ F^o(g)\leq 1\right\}.
\]
Moreover,
\[
P_F(E;\Omega)=\int_{\Omega\cap\partial^* E}F (\nu_E) d\mathcal H^{N-1}
\]
where $\de^*E$ is the reduced boundary of $\Omega$ and $\nu_E$ is the Euclidean normal to $\de E$.

Let us fix $u\in BV(\Omega)$ and assume that $u\equiv 0$ in $\R^N\setminus\Omega$. Then $ u\in BV(\R^N)$ and
\begin{equation}
    \label{salto_variazione}
|D u|_F(\R^N)=|Du|_F(\Omega)+\int_{\partial\Omega}| u|F(\nu)d\mathcal H^{N-1}
\end{equation}
(see for example \cite[Lemma 3.9]{BoundaryTrace}).

For the anisotropic perimeter, an isoperimetric inequality holds. More precisely, 
\begin{equation}
\label{wulffinequality}
    P_F(\Omega)\ge P_F(W_R),
\end{equation}
where $\mathcal W_R$ is the Wulff shape with the same measure of $\Omega$
(see for example \cite[Theorem 2.10]{fonseca1991uniqueness}).

The following approximation results in $BV$ hold (refer to \cite{anzellotti1978funzioni}, \cite[Theorem 1.17]{giusti1984minimal}  for the Euclidean case and to \cite[Proposition 2.1]{aflt} for the Finsler case).
\begin{prop}
\label{approximation_BV}
Let $f \in BV(\Omega)$, then there exists a sequence $\{f_k\}_{k\in\N} \subseteq C^{\infty}(\Omega)$ such that: 
\[\lim_{k\rightarrow+\infty}\int_{\Omega}\abs{f_k-f}dx=0
\] and 
\[
 \lim_{k\rightarrow+\infty}\abs{Df_k}_F(\Omega)=\abs{Df}_F(\Omega).
\]
\end{prop}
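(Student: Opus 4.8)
The plan is to follow the classical Anzellotti--Giaquinta mollification scheme (as in \cite[Theorem 1.17]{giusti1984minimal}), taking care that the anisotropy is absorbed entirely into the convexity of the polar function $F^o$. First I would record the lower semicontinuity of the anisotropic variation: if $f_k\to f$ in $L^1(\Omega)$, then for each fixed test field $g\in C_0^1(\Omega;\R^N)$ with $F^o(g)\le 1$ one has $\int_\Omega f\,\dive g\,dx=\lim_k\int_\Omega f_k\,\dive g\,dx\le\liminf_k|Df_k|_F(\Omega)$, and taking the supremum over $g$ gives $|Df|_F(\Omega)\le\liminf_k|Df_k|_F(\Omega)$. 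Hence, once $L^1$ convergence of the approximants is secured, it suffices to prove the reverse inequality $\limsup_k|Df_k|_F(\Omega)\le|Df|_F(\Omega)$.

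Next I would fix a locally finite annular cover of $\Omega$, e.g.\ $A_i=\{x\in\Omega:(i+1)^{-1}<\dist(x,\partial\Omega)<(i-1)^{-1}\}$ (with $1/0:=+\infty$), which has overlap at most two and satisfies $\bigcup_i A_i=\Omega$, together with a subordinate smooth partition of unity $\{\varphi_i\}\subset C_0^\infty(A_i)$, $\varphi_i\ge0$, $\sum_i\varphi_i\equiv1$ on $\Omega$. For a tolerance $\eps>0$ I would choose mollification radii $\eps_i$ so small that each piece $(f\varphi_i)*\rho_{\eps_i}$ keeps its support inside $A_i$ (preserving local finiteness, hence $C^\infty$ regularity of the sum) and so that both $\|(f\varphi_i)*\rho_{\eps_i}-f\varphi_i\|_{L^1}$ and $\|(f\nabla\varphi_i)*\rho_{\eps_i}-f\nabla\varphi_i\|_{L^1}$ are $<\eps\,2^{-i}$. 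Setting $f_\eps:=\sum_i(f\varphi_i)*\rho_{\eps_i}\in C^\infty(\Omega)$, summing the first family of estimates yields $\|f_\eps-f\|_{L^1}<\eps$, i.e.\ the $L^1$ convergence.

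For the variation estimate I would work through the dual definition. Fixing $g\in C_0^1(\Omega;\R^N)$ with $F^o(g)\le1$, moving the mollifier onto $g$ (write $g_i:=g*\rho_{\eps_i}$), using $\varphi_i\,\dive g_i=\dive(\varphi_i g_i)-\nabla\varphi_i\cdot g_i$ and the $BV$ integration by parts $\int_\Omega f\,\dive\psi\,dx=-\int_\Omega\psi\cdot dDf$ for $\psi\in C_0^1$, I obtain
\[
\int_\Omega f_\eps\,\dive g\,dx=-\int_\Omega G_\eps\cdot dDf-\int_\Omega R_\eps\cdot g\,dx,
\]
where $G_\eps:=\sum_i\varphi_i\,g_i$ and $R_\eps:=\sum_i\bigl[(f\nabla\varphi_i)*\rho_{\eps_i}-f\nabla\varphi_i\bigr]$; the identity $\sum_i\nabla\varphi_i\equiv0$ is precisely what collapses the partition-of-unity errors into $R_\eps$. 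The remainder term is bounded by $\|g\|_\infty\|R_\eps\|_{L^1}\le b\,\eps$, using $\|R_\eps\|_{L^1}<\eps$ and the fact that \eqref{eq:lin} forces $F^o(g)\le1\Rightarrow|g|\le b$. The crucial point is the main term: since $F^o$ is convex, $1$-homogeneous and $\rho_{\eps_i}$ is a probability density, Jensen's inequality gives $F^o(g_i)\le1$ pointwise; then, as $G_\eps$ is a convex combination of the $g_i$ with weights $\varphi_i\ge0$ summing to $1$, convexity yields $F^o(G_\eps)\le1$ everywhere. Writing the polar decomposition $Df=\nu_f\,|Df|$ with $|Df|_F(\Omega)=\int_\Omega F(\nu_f)\,d|Df|$ and invoking \eqref{prodscal}, this forces $-\int_\Omega G_\eps\cdot dDf\le|Df|_F(\Omega)$. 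Combining, $\int_\Omega f_\eps\,\dive g\,dx\le|Df|_F(\Omega)+b\,\eps$ uniformly in $g$, hence $|Df_\eps|_F(\Omega)\le|Df|_F(\Omega)+b\,\eps$, and $\eps\to0$ closes the reverse inequality.

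I expect the main obstacle to be exactly the control of the partition-of-unity error: a naive estimate of $\int_\Omega f\nabla\varphi_i\cdot(g_i-g)\,dx$ cannot be made small uniformly in $g$, since $g$ may oscillate at scale $\eps_i$, while the radii $\eps_i$ must be fixed \emph{before} taking the supremum over $g$. The resolution I would stress is to push the mollifier back onto $f\nabla\varphi_i$, so the error becomes $\int_\Omega R_\eps\cdot g\,dx$ with $R_\eps$ independent of $g$ and small in $L^1$; only after this rearrangement can the supremum over admissible $g$ be taken. The anisotropy itself is comparatively painless, entering solely through the Jensen/convex-combination step for $F^o$ and through the duality inequality \eqref{prodscal}.
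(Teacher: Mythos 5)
The paper itself gives no proof of this proposition --- it is stated with citations to Anzellotti--Giaquinta, \cite[Theorem 1.17]{giusti1984minimal} and \cite[Proposition 2.1]{aflt} --- and your argument is exactly the classical mollification-with-partition-of-unity proof from those references, correctly adapted to the anisotropy (Jensen's inequality for the convex $1$-homogeneous $F^o$ to get $F^o(g*\rho_{\eps_i})\le 1$, sub-convex combination to get $F^o(G_\eps)\le 1$, and the duality \eqref{prodscal}), so it is correct and essentially the same as the cited proof; your rearrangement pushing the mollifier back onto $f\nabla\varphi_i$ so that the error $R_\eps$ is independent of $g$ is precisely the right fix for the uniformity issue you identify. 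Two cosmetic remarks: the bound $|g|\le b$ for $F^o(g)\le 1$ comes from $F(\xi)\le b|\xi|$ (a consequence of convexity and homogeneity, as the paper notes), not directly from \eqref{eq:lin}; and the step $-\int_\Omega G_\eps\cdot dDf\le |Df|_F(\Omega)$ invokes the representation $|Df|_F(\Omega)=\int_\Omega F(\nu_f)\,d|Df|$ (available from \cite{amar1994notion}), which is legitimate, though it can be avoided by testing the sup-definition directly with the $C^1_0$ partial sums $\sum_{i\le m}\varphi_i g_i$, which still satisfy $F^o\le 1$, and passing to the limit in $m$ by dominated convergence.
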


\begin{prop}
Let $E$  be a set of finite perimeter in $\Omega$. A sequence of $C^{\infty}$ sets $\{E_k\}_k$ exists, such that: 
\[
\lim_{k \rightarrow +\infty}\int_{\Omega} \abs{\chi_{E_k}-\chi_{E}}dx=0
\]
and
\[
\lim_{k\rightarrow +\infty}|D\chi_{E_k}|_F(\Omega)=P_F(E;\Omega).
\]
\end{prop}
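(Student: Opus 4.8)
The plan is to deduce the result from the smooth approximation of the function $\chi_E$ provided by Proposition \ref{approximation_BV}, combined with the coarea formula for the anisotropic total variation and a careful selection of a superlevel set. First I would apply Proposition \ref{approximation_BV} to $f=\chi_E\in BV(\Omega)$, obtaining a sequence $\{f_k\}\subseteq C^\infty(\Omega)$ with $f_k\to\chi_E$ in $L^1(\Omega)$ and $|Df_k|_F(\Omega)\to P_F(E;\Omega)$. Replacing $f_k$ by its truncation $\max\{0,\min\{f_k,1\}\}$, which does not increase the anisotropic total variation and preserves the $L^1$ limit and, by lower semicontinuity, the convergence $|Df_k|_F(\Omega)\to P_F(E;\Omega)$, I may assume $0\le f_k\le 1$. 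The functions $f_k$ are smooth but not characteristic functions, so the core of the argument is to pass from $f_k$ to the characteristic function of a smooth superlevel set $\{f_k>t\}$ for a suitably chosen level $t$.

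The two quantitative ingredients are the following. By the coarea formula for the anisotropic total variation,
\[
|Df_k|_F(\Omega)=\int_0^1 P_F(\{f_k>t\};\Omega)\,dt,
\]
so that $\int_0^1 P_F(\{f_k>t\};\Omega)\,dt\to P_F(E;\Omega)$. On the other hand, a Fubini (layer-cake) computation gives the identity
\[
\int_0^1\left(\int_\Omega\big|\chi_{\{f_k>t\}}-\chi_E\big|\,dx\right)dt=\int_\Omega|f_k-\chi_E|\,dx,
\]
whose right-hand side tends to $0$; hence, up to a subsequence, for a.e. level $t\in(0,1)$ one has $\chi_{\{f_k>t\}}\to\chi_E$ in $L^1(\Omega)$.

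Finally I would select a good level. Since $P_F(\cdot;\Omega)$ is lower semicontinuous with respect to $L^1$ convergence (being a supremum of linear functionals), for a.e. $t$ the $L^1$ convergence of the superlevel sets yields $\liminf_k P_F(\{f_k>t\};\Omega)\ge P_F(E;\Omega)$. Applying Fatou's lemma to $\int_0^1 P_F(\{f_k>t\};\Omega)\,dt$ and comparing with the limit $P_F(E;\Omega)$ forces $\liminf_k P_F(\{f_k>t\};\Omega)=P_F(E;\Omega)$ for a.e. $t\in(0,1)$. By Sard's theorem the critical values of all the $f_k$ form a null set, so I may fix a single level $t^*\in(0,1)$ that is simultaneously a regular value of every $f_k$ and a good level in the above sense. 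Setting $E_k:=\{f_k>t^*\}$, each $\partial E_k=\{f_k=t^*\}$ is a smooth hypersurface, so $E_k$ is a $C^\infty$ set; moreover $\chi_{E_k}\to\chi_E$ in $L^1(\Omega)$ and, along a subsequence realizing the $\liminf$, $P_F(E_k;\Omega)\to P_F(E;\Omega)$, which is the claim. The main obstacle is precisely this level selection: the coarea formula only controls the average over $t$ of the perimeters, and one must combine it with Fatou's lemma and the lower semicontinuity of $P_F$ to upgrade the averaged information into the pointwise-in-$t$ equality that makes a single level $t^*$ usable for the whole sequence.
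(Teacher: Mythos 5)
Your proposal follows essentially the route behind the paper's own treatment: the paper does not prove this proposition but cites it (Anzellotti--Giaquinta, \cite[Theorem 1.17]{giusti1984minimal}, and \cite[Proposition 2.1]{aflt}), and the classical proof in those references is exactly your scheme --- approximate $\chi_E$ by smooth functions, use the coarea formula to control the average of $P_F(\{f_k>t\};\Omega)$ over $t\in(0,1)$, use the layer-cake identity to get $L^1$ convergence of the superlevel sets for a.e.\ $t$ (along a subsequence with summable $L^1$ errors), and then combine lower semicontinuity of $P_F(\cdot;\Omega)$ with Fatou, noting that the interval $(0,1)$ has measure exactly $1$, to force $\liminf_k P_F(\{f_k>t\};\Omega)=P_F(E;\Omega)$ for a.e.\ $t$; Sard's theorem (applied to the countably many $f_k$ at once) then supplies a common regular value $t^*$.

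One step, as literally written, is broken but easily repaired: the truncation $\max\{0,\min\{f_k,1\}\}$ is only Lipschitz, not $C^\infty$, so after truncating you can no longer invoke Sard's theorem or conclude that $\{f_k=t^*\}$ is a smooth hypersurface. The truncation is in fact superfluous. Without it, the coarea formula still gives
\begin{equation*}
\int_0^1 P_F(\{f_k>t\};\Omega)\,dt\ \le\ \abs{Df_k}_F(\Omega)\ \longrightarrow\ P_F(E;\Omega),
\end{equation*}
and the layer-cake computation gives the inequality $\int_0^1\lto\int_\Omega\abs{\chi_{\{f_k>t\}}-\chi_E}\,dx\rto dt\le\norm{f_k-\chi_E}_{L^1(\Omega)}$ (since $\{\chi_E>t\}=E$ for $t\in(0,1)$), which is all your Fatou-plus-semicontinuity argument needs, because the integrand $\liminf_k P_F(\{f_k>t\};\Omega)-P_F(E;\Omega)$ is nonnegative a.e.\ on an interval of length $1$. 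Alternatively, mollify $\chi_E$ directly, $f_k=\chi_E*\rho_{\varepsilon_k}$, which satisfies $0\le f_k\le 1$ automatically and is the choice made in the classical proof. With either fix, and granting that the proposition only asserts the existence of some sequence (so your passage to a subsequence realizing the $\liminf$ is legitimate), the argument is complete.
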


\section{An anisotropic trace inequality}
\label{trace_sec}
In this section we prove a trace inequality in $BV$ with respect to the anisotropic total variation. We first give the result in a general case (Proposition \ref{trace_prop_Q}), then we refine the constants involved in the inequality by requiring more regularity on the boundary of $\Omega$ (Proposition \ref{trace_prop_1}).

\begin{comment}
in the spirit of \cite[Theorem4]{anzellotti1978funzioni}.
For that we remeber the following 
:
\begin{lem}(Sobolev-Poincaré inequality) If $\Omega$ is a connected bounded open set with lipschitz boundary such that $\partial\Omega$ is continuous and if $v\in BV(\Omega)$, then 
\[\set{\int_{\Omega} \abs{v-v_{\Omega}}^{\frac{N}{N-1}}dx}^{1-\frac{1}{N}}\leq c_1(\Omega)\abs{Dv}_F,\]
where $v_{\Omega}$ is the mean value of $v$ and $c_1$ is a constant independent of $v$.
\end{lem}
As consequence of this lemma, it holds that 
\[\abs{A}^{1-\frac{1}{N}}\leq 2c_1P_F(A;\Omega),$\]
for every Caccioppoli set $A$ with $\abs{A}\leq\dfrac{\abs{\Omega}}{2}$. 
\end{comment}

Firstly, let us set 
\[
q(y)=\lim_{\rho \rightarrow 0^+}\sup\left\{\dfrac{\ds\int_{\partial\Omega} \chi_A F(\nu) d\mathcal{H}^{N-1}}{\abs{D\chi_A}_F(\Omega)}: A \subset \Omega \cap B_{\rho} (y), |A|>0, P_F(A;\Omega)<+\infty\right\}
\]
and $Q=\sup_{y\in\partial\Omega}q(y)$. The following  inequality generalizes the trace inequality given in \cite[Theorem 4]{anzellotti1978funzioni}.
\begin{prop}
\label{trace_prop_Q}
Let $\Omega$ be a bounded open set with $\mathcal H^{N-1}(\Omega)<+\infty$, and let $u$ be a function in $BV(\Omega)$. Then for any $\varepsilon>0$, it holds
\begin{equation}
    \label{trace_inequality_Q}
\int_{\partial\Omega}|u| F(\nu)d \mathcal H^{N-1}\leq (Q+\varepsilon) |Du|_F(\Omega)+c(\Omega,\varepsilon)\int_\Omega |u| dx,
\end{equation}
where $c(\Omega,\varepsilon)$ does not depend on $u$.
\end{prop}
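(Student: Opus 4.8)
The plan is to reproduce, in the anisotropic setting, the classical localization argument of Anzellotti--Giaquinta. The constant $Q$ is defined through a purely \emph{local} ratio of boundary trace to relative perimeter, so the strategy is to first prove the estimate near a single boundary point and then patch the local estimates together with a partition of unity; the additional volume term $c(\Omega,\varepsilon)\int_\Omega|u|\,dx$ is precisely the price paid for this localization.

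\textbf{Step 1 (local estimate for sets).} Fix $\varepsilon>0$. Since $q(y)$ is a limit of suprema over shrinking balls and $Q=\sup_{y}q(y)$, for every $y\in\partial\Omega$ there exists $\rho_y>0$ such that every set $A\subset\Omega\cap B_{\rho_y}(y)$ of finite perimeter with $|A|>0$ obeys
\[
\int_{\partial\Omega}\chi_A\,F(\nu)\,d\mathcal H^{N-1}\le (Q+\varepsilon)\,P_F(A;\Omega).
\]
\textbf{Step 2 (local estimate for $BV$ functions via coarea).} I would next upgrade this to any $v\in BV(\Omega)$ with $\operatorname{supp}v\subset B_{\rho_y}(y)$. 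Taking $v\ge0$ without loss of generality (apply the argument to $|u|$, noting $|D|u||_F\le|Du|_F$), extend $v$ by zero to $\bar v\in BV(\R^N)$ and set $E_t=\{v>t\}\subset\Omega\cap B_{\rho_y}(y)$. Applying the jump formula \eqref{salto_variazione} to $v$ and to each $\chi_{E_t}$, and combining with the coarea formula $|Dv|_F(\Omega)=\int_0^{\infty}P_F(E_t;\Omega)\,dt$ for the anisotropic total variation, one obtains the trace decomposition $\int_{\partial\Omega}v\,F(\nu)\,d\mathcal H^{N-1}=\int_0^{\infty}\big(\int_{\partial\Omega}\chi_{E_t}F(\nu)\,d\mathcal H^{N-1}\big)\,dt$. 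Inserting the set estimate of Step 1 level by level yields the local bound $\int_{\partial\Omega}|v|\,F(\nu)\,d\mathcal H^{N-1}\le (Q+\varepsilon)\,|Dv|_F(\Omega)$.

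\textbf{Step 3 (globalization).} Since $\partial\Omega$ is compact, the balls $\{B_{\rho_y}(y)\}_{y\in\partial\Omega}$ admit a finite subcover $B_{\rho_{y_1}}(y_1),\dots,B_{\rho_{y_m}}(y_m)$. Choose $\phi_i\in C_0^\infty(B_{\rho_{y_i}}(y_i))$ with $0\le\phi_i\le1$, $\sum_i\phi_i\equiv1$ in a neighborhood of $\partial\Omega$ and $\sum_i\phi_i\le1$ on $\Omega$. On $\partial\Omega$ we then write $\int_{\partial\Omega}|u|F(\nu)\,d\mathcal H^{N-1}=\sum_{i}\int_{\partial\Omega}\phi_i|u|F(\nu)\,d\mathcal H^{N-1}$, and since $\phi_i u\in BV(\Omega)$ is supported in $B_{\rho_{y_i}}(y_i)$, Step 2 applies to each $\phi_i u$. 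The Leibniz rule $D(\phi_i u)=\phi_i Du+u\nabla\phi_i\,\mathcal L^N$ together with subadditivity of the total variation with respect to $F$ gives $|D(\phi_i u)|_F(\Omega)\le\int_\Omega\phi_i\,d|Du|_F+\int_\Omega|u|\,F(\nabla\phi_i)\,dx$. Summing over $i$ and using $\sum_i\phi_i\le1$ and $\sum_i F(\nabla\phi_i)\le b\sum_i|\nabla\phi_i|\le C(\Omega,\varepsilon)$ (finitely many fixed smooth functions), one arrives at \eqref{trace_inequality_Q} with $c(\Omega,\varepsilon)=(Q+\varepsilon)\,\sup_{x}\sum_i F(\nabla\phi_i(x))$, which is independent of $u$.

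The main obstacle I expect lies in Step 2, namely making rigorous the compatibility between the trace operator and the coarea decomposition in the anisotropic setting for a merely bounded open set with $\mathcal H^{N-1}(\partial\Omega)<+\infty$: one must justify that traces exist, that extension by zero lands in $BV(\R^N)$ with the splitting \eqref{salto_variazione}, and that this splitting commutes with integration over the levels $t$. This is where the hypothesis on $\mathcal H^{N-1}(\partial\Omega)$ is genuinely used, and where care is needed since no boundary regularity is assumed; the subsequent partition-of-unity step is routine once the local estimate is in hand.
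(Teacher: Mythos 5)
Your proposal is correct and follows essentially the same route as the paper's proof: the local set estimate extracted from the definitions of $q(y)$ and $Q$, the upgrade to $BV$ functions supported near a boundary point via the layer-cake/coarea decomposition together with $|D|u||_F(\Omega)\le|Du|_F(\Omega)$, and the globalization by a finite subcover with a partition of unity and the Leibniz rule, which produces exactly the volume term $c(\Omega,\varepsilon)\int_\Omega|u|\,dx$. Your Step 2 detour through the zero-extension and the jump formula \eqref{salto_variazione} is a slightly more roundabout (but equivalent) way of justifying the level-by-level trace identity that the paper handles directly by Fubini, so the two arguments are the same in substance.
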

\begin{proof}

Let us fix $y \in \partial \Omega$ and $\rho (y)>0$ such that 
\[
\int_{\partial\Omega} \chi_{B} F(\nu)d\mathcal{H}^{N-1} \leq (Q+\varepsilon) |D\chi_B|_F(\Omega),
\]
for any $B\subset \Omega\cap B_{\rho (y)}(y)$ with $P_F(B;\Omega)<+\infty$.

If $\spt (u)\subset B_{\rho (y)}(y)$, then
\begin{multline*}
\int_{\partial\Omega}| u| F(\nu)d\mathcal H^{N-1}=\int_{\partial \Omega}F(\nu)\left(\int_0^{+\infty} \chi_{\{\abs{u}>t\}}(y)dt\right) d\mathcal{H}^{N-1}\\
=\int_0^{+\infty}\left(\int_{\partial\Omega}F(\nu)\chi_{\{\abs{u}>t\}}(y)d\mathcal H^{N-1}\right)dt\leq (Q+\varepsilon)\int_0^{+\infty}|D\chi_{\{\abs{u}>t\}}|_F(\Omega) dt.
\end{multline*}
By using the coarea formula, we have
\[
\int_{\partial\Omega}|u|F(\nu)d\mathcal H^{N-1}\leq (Q+\varepsilon)|D|u||_F(\Omega)\leq (Q+\varepsilon)|Du|_F(\Omega).
\]

Let $\{B_{\rho}(y)\}_{y \in \partial \Omega}$ be a cover of $\partial \Omega$ and let us extract a finite sub-cover $B_1,\dots,B_k$. Now, considering a partition of unity $\varphi_1,\dots,\varphi_k$ such that
\[
0 \leq \varphi_i \leq 1, \qquad \varphi_i \in C^1_0(B_i), \qquad \sum_{i=1}^k \varphi_i(y)=1 \qquad \textrm{if}\; y\in \partial \Omega.
\]
If $f \in BV(\Omega)$, then 
\[
\begin{split}
\int_{\partial\Omega} \abs{u} F\left(\nu\right) d\mathcal{H}^{N-1}&\leq (Q+\varepsilon) \left|D\left(\sum_{i=1}^k \varphi_i u\right)\right|_F(\Omega)\\
&\leq (Q+\varepsilon) \sum_{i=1}^k (\abs{\varphi_iDu}_F(\Omega)+\abs{uD\varphi_i}_F(\Omega))\\
&=(Q+\varepsilon) \sum_{i=1}^k\left(\int_\Omega \varphi_id |Du|_F+\int_\Omega ud\abs{D\varphi_i}_F(\Omega)\right)\\
&\leq (Q+\varepsilon)\abs{Du}_F(\Omega)+c(\Omega,\varepsilon)\int_{\Omega}\abs{u}dx.
\end{split}
\]
\end{proof}
If the boundary of $\Omega$ is sufficilently smooth, we can show that $Q$ can be taken equal to $1$ and $\eps=0$. More precisely, we have the following.
\begin{prop}
\label{trace_prop_1}
Let $\Omega$ be a bounded open connected set of class $C^2$. Then there exists a positive constant $c$ such that
\begin{equation}
    \label{trace_inequality_1}
    \int_{\partial \Omega}\abs{u} F(\nu)\, d\mathcal{H}^{N-1}\leq \abs{Du}_F(\Omega)+c\int_{\Omega}\abs{u}dx,\quad \forall u\in BV(\Omega).
\end{equation}
\end{prop}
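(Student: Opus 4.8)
The plan is to sidestep the estimate $Q\le 1$ coming from Proposition \ref{trace_prop_Q} and instead \emph{calibrate} the boundary integral by constructing a single admissible vector field. Concretely, I will produce $g\in C^1(\overline\Omega;\R^N)$ with $F^o(g)\le 1$ in $\Omega$, with $g\cdot\nu=F(\nu)$ on $\partial\Omega$, and with $\dive g$ bounded. Granting such a field, I apply the Gauss--Green formula for $BV$ functions to $|u|\in BV(\Omega)$:
\[
\int_{\partial\Omega}|u|\,(g\cdot\nu)\,d\mathcal H^{N-1}=\int_\Omega |u|\,\dive g\,dx+\int_\Omega g\cdot dD|u|.
\]
The left-hand side equals $\int_{\partial\Omega}|u|F(\nu)\,d\mathcal H^{N-1}$; on the right, the first term is bounded by $\norm{\dive g}_{L^\infty}\int_\Omega|u|\,dx$, while the representation of the anisotropic total variation together with the anisotropic Cauchy--Schwarz inequality \eqref{prodscal} and $F^o(g)\le 1$ gives $\int_\Omega g\cdot dD|u|\le |D|u||_F(\Omega)\le|Du|_F(\Omega)$, the last inequality by the evenness of $F$. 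This yields \eqref{trace_inequality_1} with $c=\norm{\dive g}_{L^\infty}$, with constant exactly $1$ and no $\varepsilon$.

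The field is built from the anisotropic distance. Since $\partial\Omega$ is $C^2$ and compact, $d_F$ is $C^2$ on a uniform tube $\Omega_\delta:=\{x\in\Omega:d_F(x)<\delta\}$ for $\delta$ small, and there I set $g_0=-F_\xi(\nabla d_F)$. Using $F(\nabla d_F)=1$ a.e.\ and \eqref{FF0xi} one gets $F^o(g_0)=F^o(F_\xi(\nabla d_F))=1$; using \eqref{dentro_fuori}, the $0$-homogeneity and oddness of $F_\xi$, and $F_\xi(\xi)\cdot\xi=F(\xi)$, the boundary values are $g_0=F_\xi(\nu)=n^F$ on $\partial\Omega$, so $g_0\cdot\nu=F(\nu)$. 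Finally $\dive g_0=\dive[F_\xi(-\nabla d_F)]$ is the anisotropic mean curvature of the level sets of $d_F$, which by the $C^2$ regularity and \eqref{laplaciano_cambio} equals $\sum_{i}\kappa^F_i/(1-t\kappa^F_i)$ at distance $t=d_F$; this is bounded on $\Omega_\delta$ once $\delta$ is chosen smaller than $1/\max_i\sup|\kappa^F_i|$.

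To globalize, I cut off. Pick $\eta\in C^\infty(\overline\Omega)$ with $0\le\eta\le 1$, $\eta\equiv 1$ on $\{d_F\le\delta/2\}$ and $\eta\equiv 0$ on $\{d_F\ge\delta\}$, and set $g=\eta g_0$, extended by $0$ to all of $\Omega$. Then $g\in C^1(\overline\Omega;\R^N)$, $F^o(g)=\eta\,F^o(g_0)\le 1$, and $g\cdot\nu=F(\nu)$ on $\partial\Omega$ since $\eta\equiv 1$ there. Moreover $\dive g=\eta\,\dive g_0+\nabla\eta\cdot g_0$ is bounded, with a bound depending only on the curvatures of $\partial\Omega$ and on $\delta$, hence only on $\Omega$ and not on $u$.

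The main obstacle is exactly this globalization: the calibration $g_0$ is natural and bounded-divergence only where $d_F$ is $C^2$, i.e.\ away from the anisotropic cut locus, so the cutoff is unavoidable and one must check that the extra term $\nabla\eta\cdot g_0$ does not destroy the bound (it does not, because $\delta$ is a fixed geometric constant, so $|\nabla\eta|$ and $|g_0|$ are controlled). A secondary technical point is the rigorous justification of the Gauss--Green formula for the $BV$ function $|u|$ paired with the $C^1$ field $g$ that does not vanish on $\partial\Omega$; this follows either from the known $BV$ Gauss--Green (Anzellotti) pairing, or by approximating $u$ in $C^\infty(\overline\Omega)$ strictly, so that the traces converge in $L^1(\partial\Omega)$ and the measures $Du_k$ converge weakly-$*$, letting every term pass to the limit.
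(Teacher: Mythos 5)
Your proof is correct, and it rests on the same geometric mechanism as the paper's: both calibrate the boundary term with the field $F_\xi(-\nabla d_F)$ in a tubular neighborhood where $d_F$ is $C^2$, both use \eqref{FF0xi} and \eqref{prodscal} to control the pairing with $Du$ by $\abs{Du}_F(\Omega)$, and both control the divergence of that field through the curvature estimate \eqref{laplaciano_cambio}--\eqref{boundmc}. The implementations, however, genuinely differ. The paper first reduces to $u$ smooth and nonnegative, multiplies by the Lipschitz weight $\left(\frac{\mu}{2}-d_F\right)^+$ (which plays exactly the role of your cutoff $\eta$), and extracts the boundary term via the change-of-variables formula in anisotropic normal coordinates (Theorem \ref{change_normal}), integrating by parts in $t$ and using $J(y,0)=1$; this yields the explicit constant $c=2N/\mu$, with $\mu$ the ratio of the uniform interior sphere radius to the largest principal radius of $\partial\mathcal W$, but it leaves implicit the approximation step needed to pass from smooth $u$ to general $BV(\Omega)$ with convergence of traces. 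Your route applies the $BV$ Gauss--Green formula (available in the paper's own reference \cite{anzellotti1978funzioni}) directly to $\abs{u}$, so no reduction to smooth functions and no Jacobian computation is needed; the price is a less explicit constant $c=\norm{\dive g}_{L^\infty}$ and the need for the representation $\abs{Dv}_F(\Omega)=\int_\Omega F(\sigma)\,d\abs{Dv}$ (standard, e.g. \cite{amar1994notion}) to justify $\int_\Omega g\cdot dD\abs{u}\le \abs{D\abs{u}}_F(\Omega)$ for a field not vanishing on $\partial\Omega$, since the duality definition only quantifies over compactly supported fields. Two small points to tidy: arrange $\eta\equiv 0$ already on $\lgr d_F\ge 3\delta/4\rgr$ so that the zero extension of $\eta g_0$ is genuinely $C^1$ across $\lgr d_F=\delta\rgr$, and note that with $\delta<1/\sup_i\kappa_i^F$ only the positive curvatures need control in $\sum_i \kappa_i^F/(1-t\kappa_i^F)$, exactly as in \eqref{boundmc}.
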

\begin{proof}
Since $\Omega$ is $C^2$, then a uniform sphere condition of radius $r>0$ holds, in the sense that for every point $y\in\de\Omega$ there exists $z\in\Omega$ such that $y\in \overline{B_r(z)}\subset\overline\Omega$. Let $R\in]0,+\infty[$ be the maximum of the principal radii of curvature of $\de \mathcal W$. Such maximum exists being $F$ (and $F^o$) strongly convex. If   $\kappa_1^F,\dots,\kappa_{N-1}^F$ are the anisotropic principal curvatures, we have that 
\[
{\kappa_i}^F(y)\leq \dfrac{1}{\mu},  \quad i=1,\ldots, N-1,
\]
with $\mu=\frac{r}{R}$ (\cite[Lemma 5.4]{crasta2007distance}). Therefore, in the set $\Omega_{\frac\mu 2}:=\left\{x\in\Omega\ : \ d_F(x)< \frac{\mu}2\right\}$, it holds that $d_F$ is $C^2$ (\cite[Lemma 4.1 and Theorem 4.16]{crasta2007distance}) and
\[
\dfrac{{\kappa^F_i(y)}}{1-{\kappa^F_i(y)}d_F(x)}\le
\begin{cases}
0 &\text{ if } {\kappa^F_i}\le 0\\
\dfrac{2}{\mu} & \text{ if }0<{\kappa^F_i}\le \dfrac{1}{\mu},
\end{cases}
\]
where $y\in\de\Omega$ is the anisotropic projection of $x\in \Omega$ on $\de\Omega$. 
Then
\begin{equation}
    \label{boundmc}
    \sum_{i=1}^{N-1}\dfrac{{\kappa_i}^F(y)}{1-{\kappa_i}^F(y)d_F(x)} \le 2\frac{N-1}{\mu}
\end{equation}
for $x\in \Omega_{\frac{\mu}2}$.
We may restrict ourselves to the case $u$ is nonnegative and smooth. Integrating by parts and recalling that $F(\nabla d_F(x))=1$ in $\Omega$, it holds that
\begin{multline*}
\int_{\Omega} -\Delta_F d_F(x)\, u(x) \left(\frac{\mu}{2}-d_F(x)\right)^+\,dx\\ 
=\int_{\Omega}F_{\xi}(\nabla d_F(x) )\cdot \nabla u(x) \left(\frac{\mu}{2}-d_F(x)\right)^+\, dx\\
\quad - \int_{\Omega_{\frac\mu 2}} u(x) F_{\xi}(\nabla d_F(x) )\cdot  \nabla d_F(x)\, dx -\frac{\mu}{2}\int_{\partial\Omega} u(x)   F_{\xi}(\nabla d_F(x))\cdot \nu\, dx\\
=\int_{\Omega}F_{\xi}(\nabla d_F(x))\cdot\nabla u(x) \left(\frac{\mu}{2}-d_F(x)\right)^+dx-\int_{\Omega_{\frac{\mu}{2}}} u(x)\,dx +\frac{\mu}{2}\int_{\partial \Omega} u(y)\, F(\nu)d\mathcal H^{N-1}. 
\end{multline*}
Now, we estimate the term
\[
\int_{\Omega}F_{\xi}(\nabla d_F(x))\cdot \nabla u (x) \left(\frac{\mu}{2}-d_F(x)\right)^+dx.
\]
By \eqref{prodscal} and \eqref{FF0xi} it holds that
\begin{equation}
\label{passtracciain}
\int_{\Omega}F_{\xi}(\nabla d_F(x))\cdot \nabla u (x) \left(\frac{\mu}{2}-d_F(x)\right)^+dx \ge -\dfrac{\mu}{2}\int_\Omega F(\nabla u(x))\,dx.
\end{equation}
On the other hand, the change of variable formula \eqref{change_normal} gives that
\begin{align*}
&\int_{\Omega}F_{\xi}(\nabla d_F(x)) \cdot \nabla u(x)  \left(\frac{\mu}{2}-d_F(x)\right)^+dx \\
%&=\int_{\partial \Omega}F(\nu)\int_0^{\ell(y)}\left(\frac{\mu}{2}-d_F(y)\right)^+\left[\dfrac{d}{dt}\left(u(\phi(y,t))\right)\right]J(y,t)dt\,d\mathcal{H}^{N-1}(y)\\
&=\int_{\partial \Omega}F(\nu)\int_0^{\frac{\mu}{2}}\left(\frac{\mu}{2}-t\right)\dfrac{d}{dt}[ u(\phi(y,t))] J(y,t)dt\,d\mathcal{H}^{N-1}.
\end{align*}
Integrating by parts and using the fact that $J(y,0)=1$, the above integral becomes
\begin{multline*}
%    \int_{\partial \Omega}F(\nu)\int_0^{\frac{\mu}{2}}\left(\frac{\mu}{2}-t\right)\dfrac{d}{dt}[ u(\phi(y,t))] J(y,t)dt\;d\mathcal{H}^{N-1}(y)=\\
%    &-\frac{\mu}{2}\int_{\partial\Omega}u(\phi(y,0))J(y,0)F(\nu)-\int_{\partial\Omega}F(\nu)\int_0^\frac{\mu}{2}u\left(\dfrac{\mu}{2}-t\right)J'+uJdt=\\
-\frac{\mu}{2}\int_{\partial\Omega}u(y)F(\nu)d\mathcal H^{N-1}(y) -\int_{\partial\Omega}F(\nu)
\int_0^\frac{\mu}{2} u(\phi(y,t)) \left(\dfrac{\mu}{2}-t\right)\frac{dJ}{d t}dt\;d\mathcal{H}^{N-1}(y)\\
\hfill +\int_{\de\Omega} F(\nu)\int_0^{\frac{\mu}{2}} u(\phi(y,t))J (y,t)dt\;d\mathcal H^{N-1}(y)\\
\le -\frac{\mu}{2}\int_{\partial\Omega}u(y)F(\nu)d\mathcal H^{N-1}(y)+(N-1)\int_{\de \Omega}F(\nu)\int_0^{\frac{\mu}{2}}u(y)J(y,t)\,dt\;d\mathcal{H}^{N-1}(y)\\ 
       \hfill +\int_{\de\Omega} F(\nu)\int_0^{\frac{\mu}{2}} u(\phi(y,t))J (y,t)dt\;d\mathcal H^{N-1}(y)\\
    \\=-\frac{\mu}{2}\int_{\partial\Omega}u(y)F(\nu)d\mathcal{H}^{N-1}(y)+N\int_{\Omega}u\,dx
\end{multline*}
where in the inequality we have used \eqref{laplaciano_cambio} and the bound \eqref{boundmc}.
Hence, joining with \eqref{passtracciain} it holds that
\[
\int_{\partial \Omega}u(y)F(\nu)d\mathcal{H}^{N-1}\le \abs{Du}_F(\Omega)+\dfrac{2N}{\mu}\int_{\Omega}u(x)\,dx.
\]
\end{proof}
\begin{rem}
    Using the notation of the above theorem, we explicitly observe that the constant in \eqref{trace_inequality_1} is 
    \[
    c=\frac{2N}{\mu}.
    \]
\end{rem}

\section{Interior approximation}\label{approx_sec}
Now we provide an approximation result for $BV$-functions by smooth functions with compact support in $\Omega$. %This result has been proved in \cite{littig2014} in the Euclidean case.

We preliminary state two useful lemmas. Firstly, we recall from \cite[Lemma 3.2]{littig2014} the following result on diffeomorphic perturbations of sets $\Omega$ with Lipschitz boundary. We denote by $\iota$ and $I$ the identical vector and matrix function, respectively.
\begin{lem}
\label{diff}
Let $\Omega\subset \mathbb R^N$ be a bounded open set with Lipschitz boundary. Then there exists $\tau_0>0$ and, for $0\leq\tau\leq\tau_0$, a family of $C^\infty-$diffeomorphisms $\Phi^\tau : \R^N\to\R^N$ with inverses $\Psi^\tau$ such that
\begin{itemize}
    \item $\Phi^0=\Psi^0=\iota$;
    \item $\Phi^\tau\to \iota$ and $\Psi^\tau\to \iota$ as $\tau\to 0$ uniformly on $\R^N$;
    \item $\nabla\Phi^\tau (x)\to I$ and $\nabla\Psi^\tau(x)\to I$ as $\tau\to 0$ uniformly with respect to $x$ on $\R^N$;
    \item $\Phi^\tau(\overline\Omega)\Subset\Omega$ for all $\tau\in(0,\tau_0]$.
\end{itemize}
\end{lem}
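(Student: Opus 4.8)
The plan is to realise $\Phi^\tau$ as the flow of a smooth, compactly supported vector field $X$ on $\R^N$ that points strictly into $\Omega$ along $\partial\Omega$. First I would construct $X$. Since $\Omega$ is bounded with Lipschitz boundary, $\partial\Omega$ is compact and admits a finite cover by open sets $U_1,\dots,U_m$ in each of which, after a rigid motion, $\partial\Omega$ is the graph $x_N=\gamma_j(x')$ of a Lipschitz function with $\mathrm{Lip}(\gamma_j)\le L$ and $\Omega\cap U_j=\{x_N>\gamma_j(x')\}$. In the $j$-th chart the open convex cone $\{w\colon w_N>L\abs{w'}\}$ consists exactly of directions pushing a boundary point strictly into $\Omega$, and it contains $e_N$; let $v_j$ be the unit vector corresponding to $e_N$ in the original coordinates. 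Choosing a smooth partition of unity $\{\psi_j\}$ subordinate to $\{U_j\}$ on a neighbourhood of $\partial\Omega$ and a cutoff $\chi\in C_c^\infty(\R^N)$ equal to $1$ near $\partial\Omega$, I set $X=\chi\sum_j\psi_j v_j$. For $y\in\partial\Omega$, every $v_j$ with $\psi_j(y)>0$ lies in the inward cone at $y$; since in any fixed chart the margin $w\mapsto w_N-L\abs{w'}$ is concave, $X(y)=\sum_j\psi_j(y)v_j$ has a strictly positive inward margin, and by continuity and compactness of $\partial\Omega$ there is a uniform $c_0>0$ bounding this margin from below.

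Next, let $\Phi^\tau$ be the flow of $X$, i.e. the solution of $\partial_\tau\Phi^\tau(x)=X(\Phi^\tau(x))$ with $\Phi^0(x)=x$. Since $X\in C_c^\infty$ it is bounded with bounded derivatives of every order, so the flow exists for all $\tau\in\R$ and each $\Phi^\tau$ is a $C^\infty$-diffeomorphism of $\R^N$ with inverse $\Psi^\tau=\Phi^{-\tau}$; in particular $\Phi^0=\Psi^0=\iota$. The uniform convergence $\Phi^\tau,\Psi^\tau\to\iota$ follows from $\Phi^\tau(x)-x=\int_0^\tau X(\Phi^s(x))\,ds$, which gives $\norm{\Phi^\tau-\iota}_\infty\le\abs{\tau}\,\norm{X}_\infty$. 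For the gradients one uses the variational equation $\partial_\tau\nabla\Phi^\tau=(\nabla X\circ\Phi^\tau)\nabla\Phi^\tau$ with $\nabla\Phi^0=I$: Gronwall's inequality yields $\norm{\nabla\Phi^\tau-I}_\infty\le\abs{\tau}\,\norm{\nabla X}_\infty\, e^{\abs{\tau}\norm{\nabla X}_\infty}\to 0$, and symmetrically for $\Psi^\tau$. Outside the compact support of $X$ all these maps equal $\iota$ exactly, so the estimates are genuinely uniform on $\R^N$.

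It remains to establish the containment $\Phi^\tau(\overline\Omega)\Subset\Omega$ for small $\tau>0$, which I expect to be the only delicate point. Because $X$ points strictly inward along $\partial\Omega$, the open set $\Omega$ is forward invariant and boundary points enter its interior immediately. Concretely, fix $x\in\overline\Omega$, and while the trajectory stays in a boundary chart $U_{j_0}$ consider $g(s)=(\Phi^s(x))_N-\gamma_{j_0}((\Phi^s(x))')$; this is Lipschitz in $s$ and satisfies $\dot g\ge X_N-L\abs{X'}\ge c_0>0$ almost everywhere (using $\abs{\nabla\gamma_{j_0}}\le L$). Hence $g$ is strictly increasing: a boundary point ($g(0)=0$) has $g(s)>0$ for $s>0$, so it lies strictly inside $\Omega$, while an interior point ($g(0)>0$) can never return $g$ to $0$ and therefore cannot cross $\partial\Omega$. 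Shrinking $\tau_0$ so that trajectories remain in the chart neighbourhoods, we obtain $\Phi^\tau(\overline\Omega)\subseteq\Omega$ for all $\tau\in(0,\tau_0]$; since $\Phi^\tau(\overline\Omega)$ is compact and contained in the open set $\Omega$, the compact containment $\Phi^\tau(\overline\Omega)\Subset\Omega$ follows. The genuinely technical ingredient is thus the construction of a globally defined, uniformly inward-pointing field for a merely Lipschitz boundary together with this invariance argument; all the remaining assertions are routine ODE estimates.
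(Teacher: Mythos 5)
First, note that the paper does not actually prove this lemma: it is recalled verbatim from \cite[Lemma 3.2]{littig2014}, so your attempt has to be measured against the construction there. Your overall strategy is the standard one and matches the cited source in spirit: a finite family of boundary charts with the uniform cone/segment property, directions $v_j$, a partition of unity, and an inward deformation (Littig--Schuricht deform directly by $x\mapsto x+\tau\sum_j\psi_j(x)v_j$ rather than by a flow, but that difference is cosmetic). Your routine ODE bookkeeping is also fine: global existence of the flow of a $C_c^\infty$ field, $\Psi^\tau=\Phi^{-\tau}$, the Gronwall estimates for $\Phi^\tau\to\iota$ and $\nabla\Phi^\tau\to I$, and the passage from $\Phi^\tau(\overline\Omega)\subseteq\Omega$ to $\Phi^\tau(\overline\Omega)\Subset\Omega$ by compactness.

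There is, however, a genuine gap at the key step. You claim that every $v_k$ with $\psi_k(y)>0$ lies in the cone $\{w\colon w_N>L\abs{w'}\}$ of a \emph{fixed} chart $U_{j_0}$ containing $y$, and then invoke concavity of $w\mapsto w_N-L\abs{w'}$ to give $X(y)$ a uniform margin. This is false whenever $L\ge 1$: a direction that is inward in its own chart need not lie in the $L$-cone of an overlapping chart. Concretely, let $\partial\Omega$ near $y$ be a hyperplane of slope exactly $L$ in the coordinates of $U_{j_0}$; a second admissible chart may use coordinates in which this hyperplane is horizontal, and then $v_k=(-Le_1+e_N)/\sqrt{1+L^2}$ in $j_0$-coordinates, whose margin is $(1-L^2)/\sqrt{1+L^2}<0$ for $L>1$, although $v_k$ is genuinely inward (the inward directions for this piece of boundary form the half-space $\{w_N>Lw_1\}$, which is strictly larger than the $L$-cone). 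Consequently the almost-everywhere inequality $\dot g\ge X_N-L\abs{X'}\ge c_0$, on which your whole forward-invariance argument rests, is simply unavailable: $X_N-L\abs{X'}$ can be negative at boundary points. The conclusion you want is still true, because the set of directions that uniformly push an entire graph patch into $\Omega$ is a convex cone; but proving this requires the exact sublinear increment functional $w'\mapsto\sup_{x'}\limsup_{t\to 0^+}\bigl(\gamma_{j_0}(x'+tw')-\gamma_{j_0}(x')\bigr)/t$, or equivalently a chaining argument in which one translates successively by $t\psi_1(x)v_1$, then $t\psi_2(x)v_2$, etc., using at each step that $v_k$ maps all of $\overline\Omega\cap U_k$ into $\Omega$. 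This is precisely why the cited proof works with the finite deformation $x\mapsto x+\tau\sum_j\psi_j(x)v_j$ (whose containment in $\Omega$ can be checked by such successive translations, and which is a diffeomorphism for small $\tau$ since $\nabla\Phi^\tau=I+\tau\nabla V$) instead of differentiating along a flow. Repair the cone/margin step along these lines and the remainder of your argument goes through.
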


Now, we give the anisotropic version of the change of coordinates formula for $BV$-functions, stated in \cite[Lemma 10.1]{giusti1984minimal}. 

\begin{lem}
\label{var}
Let $u$ be a function in $BV_\textrm{loc}(\Omega)$, $\Phi:\R^N\to\R^N$ be a diffeomorphism and $A\Subset\Omega$. Then
\begin{equation}
\label{var_thesis}
\abs{D(u\circ \Phi^{-1})}_F(\Phi(A))= \abs{HDu}_F(A),
\end{equation}
where $H=|\det \nabla\Phi|[\nabla\Phi]^{-1}$.
\end{lem}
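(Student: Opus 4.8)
The plan is to read the right-hand side of \eqref{var_thesis} through the polar decomposition of the vector measure $Du$ and then reduce the identity to the case of smooth functions. Writing $Du=\sigma\,|Du|$ with $\sigma=\frac{dDu}{d|Du|}$, the dual characterisation of the anisotropic total variation together with $F(\xi)=\sup_{F^o(\eta)\le 1}\xi\cdot\eta$ gives
\[
|HDu|_F(A)=\int_A F\big(H(x)\,\sigma(x)\big)\,d|Du|(x),
\]
so that \eqref{var_thesis} is really a statement about how the density $\sigma$ and the reference measure $|Du|$ are transported by $\Phi$. Since $A\Subset\Omega$ and $\Phi$ is a $C^\infty$-diffeomorphism, $H$ is smooth and bounded on $\overline A$, which is what will make the integrand above continuous in both variables.

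First I would establish \eqref{var_thesis} for $u\in C^1$. In this case $Du=\nabla u\,\mathcal L^N$ and $v:=u\circ\Phi^{-1}\in C^1(\Phi(A))$ satisfies, by the chain rule, $\nabla v(\Phi(x))=[\nabla\Phi(x)]^{-1}\nabla u(x)$ (with the convention on $\nabla\Phi$ for which $H=|\det\nabla\Phi|\,[\nabla\Phi]^{-1}$). Performing the change of variables $y=\Phi(x)$, whose Jacobian is $|\det\nabla\Phi(x)|$, and using the $1$-homogeneity \eqref{eq:omo} of $F$ together with $|\det\nabla\Phi|>0$,
\[
|Dv|_F(\Phi(A))=\int_{\Phi(A)}F(\nabla v)\,dy=\int_A F\big([\nabla\Phi]^{-1}\nabla u\big)\,|\det\nabla\Phi|\,dx=\int_A F(H\nabla u)\,dx=|HDu|_F(A).
\]
This is the computational heart of the lemma, and it is immediate in the smooth setting.

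To pass to a general $u\in BV_{\mathrm{loc}}(\Omega)$ I would approximate by mollification, $u_k=u*\rho_{1/k}$, which for $k$ large is smooth on a neighbourhood of $\overline A$ and satisfies $u_k\to u$ in $L^1(A)$ together with the strict convergence $Du_k\rightharpoonup Du$ and $|Du_k|(A)\to|Du|(A)$ (choosing $A$ with $|Du|(\partial A)=0$, to which the general case reduces by inner exhaustion). The smooth identity yields $|D(u_k\circ\Phi^{-1})|_F(\Phi(A))=|HDu_k|_F(A)$ for every $k$, and the point is to take $k\to\infty$ on both sides. Here the natural tool is Reshetnyak's continuity theorem: as the integrands $\theta\mapsto F(H(x)\theta)$ and $\theta\mapsto F(\theta)$ are continuous and positively $1$-homogeneous, strict convergence of $Du_k$ gives
\[
|HDu_k|_F(A)=\int_A F(H\sigma_k)\,d|Du_k|\ \longrightarrow\ \int_A F(H\sigma)\,d|Du|=|HDu|_F(A),
\]
and, writing $D(u_k\circ\Phi^{-1})=\Phi_{\#}(HDu_k)$, the analogous statement on $\Phi(A)$ gives $|D(u_k\circ\Phi^{-1})|_F(\Phi(A))\to|D(u\circ\Phi^{-1})|_F(\Phi(A))$. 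Combining the two limits with the smooth identity produces \eqref{var_thesis}.

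The step I expect to be the main obstacle is precisely this limit passage: to invoke the continuity (and not merely the lower semicontinuity) form of Reshetnyak's theorem one must secure genuine strict convergence of $u_k\circ\Phi^{-1}$ towards $u\circ\Phi^{-1}$ on $\Phi(A)$. This I would obtain by computing $\lim_k|D(u_k\circ\Phi^{-1})|(\Phi(A))=\int_A|\det\nabla\Phi|\,|[\nabla\Phi]^{-1}\sigma|\,d|Du|$ via Reshetnyak on $A$ and matching it with $|D(u\circ\Phi^{-1})|(\Phi(A))$ through the Euclidean change-of-variables lemma \cite[Lemma 10.1]{giusti1984minimal}, together with the usual care at the boundaries $\partial A$ and $\partial\Phi(A)$. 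A self-contained alternative, bypassing approximation, is to argue directly from the dual definition: one transforms each admissible field $g\in C_0^1(\Phi(A))$ by the Piola-type identity relating $\dive_x$ to $(\dive_y g)\circ\Phi$, checks that the pointwise constraint $F^o(g)\le 1$ becomes exactly the constraint dual to $\xi\mapsto F(H\xi)$, and then recognises the resulting supremum as $|HDu|_F(A)$; in this route the delicate point is the dual characterisation of the total variation with respect to the $x$-dependent norm $\xi\mapsto F(H(x)\xi)$.
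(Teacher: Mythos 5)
Your argument is correct, but it takes a genuinely different route from the paper's. The paper never leaves the dual formulation: it proves the smooth case as the pairing identity $\int_{\Phi(A)}(g\circ\Phi^{-1})\cdot\nabla(u\circ\Phi^{-1})\,dx=\int_A g\cdot(H\nabla u)\,dz$, passes to the limit in this \emph{linear} identity using only the $L^1$ convergence of the smooth approximants from Proposition \ref{approximation_BV} (no convergence of total variations is ever required, since both sides are integrations of $u$ against fixed smooth fields), and then extracts the two inequalities of \eqref{var_thesis} by pure duality: because $F^o(g\circ\Phi^{-1})\le 1$ exactly when $F^o(g)\le 1$, the maps $g\mapsto g\circ\Phi^{-1}$ and $\gamma\mapsto\gamma\circ\Phi$ exchange the constrained test classes on $A$ and $\Phi(A)$, and taking suprema on either side of the limiting identity gives the equality directly, for every $A\Subset\Omega$ and with no condition on $|Du|(\partial A)$. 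You instead pass to the limit in the total variations themselves, which forces you to invoke Reshetnyak's continuity theorem twice, to secure strict convergence of $D(u_k\circ\Phi^{-1})$ on $\Phi(A)$ by matching masses through the Euclidean change-of-variables lemma \cite[Lemma 10.1]{giusti1984minimal} (a legitimate black box here, since the paper presents this lemma as the anisotropic version of that result), and to handle the sets with $|Du|(\partial A)>0$ by exhaustion; you correctly identified this limit passage as the delicate step and your fix for it works. What your route buys is the stronger intermediate fact that the transformed mollifications converge strictly on $\Phi(A)$, which is of independent use; what the paper's route buys is economy, needing nothing beyond weak convergence and the invariance of the pointwise constraint $F^o\le 1$ under composition. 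Note finally that the ``self-contained alternative'' you sketch at the end --- transforming admissible fields and recognizing the dual constraint for $\xi\mapsto F(H(x)\xi)$ --- is in substance precisely the proof the paper gives.
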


\begin{proof}
Let us consider $u\in C^1(\Omega)$ and $g\in C_0^1(A;\R^N)$, then the following change of area formula holds
\begin{equation}
    \label{passgiusti}
    \begin{split}   
\int_{\Phi(A)}(g\circ \Phi^{-1}) \cdot \nabla (u\circ \Phi^{-1}) dx&=\int_{\Phi(A)}(g \circ  \Phi^{-1}) \cdot ((\nabla u\circ \Phi^{-1})\nabla\Phi^{-1}) dx\\
&=\int_A g \cdot (\nabla u (\nabla \Phi^{-1}\circ \Phi)) |\det \nabla\Phi| dz\\ &=\int_A g \cdot (H \nabla u) dz.
\end{split}
\end{equation}
Thus, the thesis \eqref{var_thesis} holds for $u$ in $C^1(\Omega)$, that is 
\[
\int_{\Phi(A)} F(\nabla(u\circ\Phi^{-1}))dx=
\int_{A} F(H\nabla u)dx.
\]

Suppose now that $u\in BV_{\text{loc}} (\Omega)$. By Proposition \ref{approximation_BV} we can approximate $u$ by a sequence $\{u_i\}\subset C^\infty$. Moreover, the corresponding functions $u_i\circ \Phi{^-1}$ converge to $u\circ \Phi{^-1}$ in $L^1(A)$.

Hence, we can pass to the limit in \eqref{passgiusti}, obtaining
\begin{equation}
\label{cambio_di_variabili}
\int_{\Phi(A)}(g \circ  \Phi^{-1})\cdot dD(u\circ \Phi^{-1})=
\int_A g\cdot H\, dD u=\int_{A} g\cdot (H \nu)d\abs{Du}
\end{equation}
where $\nu$ is obtained by differentiating $Du$ with respect to $|Du|$.

If $F^o(g)\leq 1$, then also $F^o(g\circ \Phi^{-1})\leq 1$ and $\spt (g\circ \Phi^{-1}))\subseteq \Phi(A)$. Therefore, by definition of total variation with respect to $F$, we have
\begin{equation}
\label{from_def}
\int_{A} g\cdot (H \nu)d\abs{Du} \le\abs{D(u\circ \Phi^{-1})}_F(\Phi(A)),
\end{equation}
The inequality \eqref{prodscal} implies that
\[ 
\sup_{F^o(g)\leq 1}\int_A g\cdot (H \nu) \ d\abs{Du}= \abs{HDu}_F(A).
\]
Hence, taking the supremum on the left hand side in \eqref{from_def}, it holds
\[
\abs{HDu}_F(A)\leq\abs{D(u\circ \Phi^{-1})}_F(\Phi(A)).
\]
For the reverse inequality, we consider $g=\gamma\circ\Phi\in C^1_0(A;\R^N)$, where $\gamma\in C^1_0(\Phi(A);\R^N)$ and $F^o(\gamma)\leq 1$. Therefore $g\circ\Phi^{-1}=\gamma$ and by \eqref{cambio_di_variabili}, we have
\[
\begin{split}
\int_{\Phi(A)}\gamma\cdot dD(u\circ \Phi^{-1})&=
\int_A ( \gamma \circ \Phi)\cdot (H \nu)d\abs{D u}(A)\\
&\leq \int_A F^o( \gamma \circ \Phi) |H| d|D u|_F\leq  \int_A \abs{H} d\abs{Du}_F.
\end{split}
\]
Hence, we have
\[
\abs{D(u\circ \Phi^{-1})}_F(\Phi(A))\leq  \abs{HDu}_F(A).
\]
\end{proof}

At this stage, we are in position to state the main approximation result.
\begin{thm}
\label{approssimazione_thm}
Let $\Omega\subset\R^N$ be an open bounded set with Lipschitz boundary  and let $u\in BV(\Omega)\cap L^p(\Omega)$ for some $p\in [1,\infty)$. Then there exists a sequence $\{u_k\}_{k\in\mathbb N}\subseteq C_0^\infty(\Omega)$ such that, for any $q\in [1,p]$,
\[
u_k\to u\ \ \text{in}\ \ L^q(\Omega)\quad\text{and}\quad |D u_k|_F(\mathbb R^N)\to |D u|_F(\mathbb R^N).
\]
\end{thm}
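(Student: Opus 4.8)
The plan is to exploit the decomposition \eqref{salto_variazione}. Extending $u$ by zero outside $\Omega$ gives $u\in BV(\R^N)$ with
\[
\abs{Du}_F(\R^N)=\abs{Du}_F(\Omega)+\int_{\partial\Omega}\abs{u}F(\nu)\,d\mathcal H^{N-1},
\]
so the target value already encodes the boundary trace of $u$; meanwhile every $u_k\in C_0^\infty(\Omega)$ satisfies $\abs{Du_k}_F(\R^N)=\int_\Omega F(\nabla u_k)\,dx$, with no boundary contribution. The whole difficulty is therefore to force the interior anisotropic variation of the approximants to recover the boundary term in the limit. I would do this in two moves: first push $u$ strictly inside $\Omega$ by the diffeomorphisms of Lemma \ref{diff}, converting the boundary jump into an interior one that $\abs{\cdot}_F(\R^N)$ detects, and then mollify.

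For the first move, let $\Psi^\tau=(\Phi^\tau)^{-1}$ be as in Lemma \ref{diff} and set $w^\tau:=u\circ\Psi^\tau$, extended by zero. Its support is $\Phi^\tau(\overline\Omega)\Subset\Omega$, so $w^\tau$ vanishes near $\partial\Omega$. Fixing an open ball $A$ with $\overline\Omega\Subset A$ and applying Lemma \ref{var} with $\Phi=\Phi^\tau$ (so that $u\circ(\Phi^\tau)^{-1}=w^\tau$ and $\Phi^\tau(\overline\Omega)\Subset\Phi^\tau(A)$), I obtain
\[
\abs{Dw^\tau}_F(\R^N)=\abs{Dw^\tau}_F(\Phi^\tau(A))=\abs{H^\tau Du}_F(A),\qquad H^\tau=\abs{\det\nabla\Phi^\tau}\,[\nabla\Phi^\tau]^{-1}.
\]
Since $\nabla\Phi^\tau\To I$ uniformly, $H^\tau\To I$ uniformly, and writing $Du=\sigma\abs{Du}$ with $\abs{\sigma}=1$ the Lipschitz bound $\abs{F(\xi)-F(\eta)}\le F(\xi-\eta)\le b\abs{\xi-\eta}$ gives
\[
\abs{\,\abs{H^\tau Du}_F(A)-\abs{Du}_F(A)\,}\le b\,\norm{H^\tau-I}_{\infty}\,\abs{Du}(A)\To 0,
\]
so $\abs{Dw^\tau}_F(\R^N)\To\abs{Du}_F(\R^N)$ as $\tau\to0$. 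A change of variables gives $\norm{w^\tau}_{L^p}^p=\int_\Omega\abs{u}^p\abs{\det\nabla\Phi^\tau}\,dx$, and continuity of the composition operator on $L^p$ along $\Psi^\tau\To\iota$ yields $w^\tau\To u$ in $L^p(\Omega)$, hence in every $L^q(\Omega)$ with $q\in[1,p]$ because $\Omega$ is bounded.

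For the second move, with $\tau$ fixed and $\delta<\dist(\Phi^\tau(\overline\Omega),\partial\Omega)$, the mollification $w^\tau*\rho_\delta$ lies in $C_0^\infty(\Omega)$, converges to $w^\tau$ in $L^q$, and has gradient $(Dw^\tau)*\rho_\delta$. Writing $Dw^\tau=\sigma\abs{Dw^\tau}$ and using that $F$ is convex and positively $1$-homogeneous, Jensen's inequality applied to the finite positive measure $\rho_\delta(x-\cdot)\abs{Dw^\tau}$ gives $F\big((Dw^\tau*\rho_\delta)(x)\big)\le\int\rho_\delta(x-y)F(\sigma(y))\,d\abs{Dw^\tau}(y)$; integrating in $x$ yields $\abs{D(w^\tau*\rho_\delta)}_F(\R^N)\le\abs{Dw^\tau}_F(\R^N)$, while lower semicontinuity of $\abs{\cdot}_F(\R^N)$ under $L^1$ convergence gives the matching lower bound in the limit, so $\abs{D(w^\tau*\rho_\delta)}_F(\R^N)\To\abs{Dw^\tau}_F(\R^N)$ as $\delta\to0$. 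Finally, a diagonal argument — choosing $\tau_k\to0$ and then $\delta_k$ small — produces $u_k:=w^{\tau_k}*\rho_{\delta_k}\in C_0^\infty(\Omega)$ with the desired convergences. I expect the only real obstacle to be the first move: it is precisely Lemma \ref{var}, read together with \eqref{salto_variazione}, that makes the interior variation of the pushed-in function recover the boundary trace in the limit, whereas the mollification and diagonalisation are routine.
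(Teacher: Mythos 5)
Your proposal is correct and follows essentially the same route as the paper: push $u$ strictly inside $\Omega$ via the diffeomorphisms of Lemma \ref{diff}, use the change-of-coordinates Lemma \ref{var} (with $H^\tau\to I$ uniformly) to get convergence of the anisotropic total variation on $\R^N$, then mollify, combining lower semicontinuity with a Jensen-type bound, and diagonalise. The only cosmetic difference is that you run the Jensen step in primal form, bounding $F\bigl((Dw^\tau*\rho_\delta)(x)\bigr)$ directly against the measure $\rho_\delta(x-\cdot)\,\abs{Dw^\tau}$, whereas the paper argues dually, mollifying the test field $\varphi$ and using $F^o(\eta_\varepsilon*\varphi)\le 1$; both are valid and equivalent in substance.
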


\begin{proof}
%Let us consider $u\in BV(\Omega)\cap L^p(\Omega)$. 
Let us fix a family $(\Phi^\tau)_{0 \leq \tau\leq\tau_0}$ of diffeomorphisms fron $\R^N$ to $\R^N$ with inverses $(\Psi^\tau)_{0 \leq \tau\leq\tau_0}$ according to Lemma \ref{diff}, and consider 
\[
u^\tau\coloneqq u \circ \Psi^\tau \quad \textrm{for} \quad \tau \in [0,\tau_0].
\]
By construction $u^\tau=0$ a.e. outside a compact subset of $\Omega$. By \cite[Theorem 3.1]{littig2014}, %Moreover, by the properties of $\Psi^\tau$ the transformation formula shows 
we know that $u^\tau \in L^p(\Omega)$ for all $\tau \in [0,\tau_0]$, $u^\tau\to u$ in $L^p(\Omega)$ and also in $L^p(\R^N)$.\\ The change of coordinates formula in Lemma \ref{var} implies that \[
\abs{Du^\tau}_F(\R^N)=\int_{\R^N}\abs{(\nabla\Psi^\tau)^T%\textcolor{red}{\sigma}
}
\;\abs{\det(\nabla\Phi^\tau)}\;d\abs{Du}_F.
\]
%\textcolor{red}{with $\sigma$ from the polar decomposition $Du=\sigma\abs{Du}$}. 
Thus $u^\tau \in BV(\R^N)$ and, since the integrand on the right hand side uniformly converges to $1$, 
\[
\lim_{\tau \rightarrow 0^+}\abs{Du^\tau}_F(\R^N)=\abs{Du}_F(\R^N)
\]
It remains only to prove that there exists $v^\tau\in C^\infty_0(\Omega)$ such that 
\[
||u^\tau-v^\tau||_p<\tau\quad\text{and}\quad
\abs{\abs{Du^\tau}_F(\R^N)-\abs{Dv^\tau}_F(\R^N)}<\tau.
\]
The first convergence (in $L^p$) has been proved in \cite[Theorem 3.1]{littig2014}; meanwhile the convergence of the total variation is based on the following argument.

For any $\varepsilon>0$, let us consider the mollification $u_\varepsilon:=u^\tau*\eta_\varepsilon$, where $\eta_\varepsilon (x)=:\varepsilon^{-n}\eta(\varepsilon^{-1}x)$, for the standard mollifier $\eta$. Hence $u_\varepsilon \to u^\tau$ in $L^p(\Omega)$ and for the zero extensions, in $L^1(\R^N)$ \cite[Proposition 3.2.c]{ambrosio2000functions}.

Then, by the lower semicontinuity of the anisotropic total variation, we have
\[
\abs{Du^\tau}_F(\R^N) \leq \lim_{\varepsilon \rightarrow 0^+}\inf \abs{Du_\varepsilon}_F(\R^N).
\]
Therefore, it remains to prove the opposite inequality 
\begin{equation}
\label{dis_anis_alto}
\lim_{\varepsilon \rightarrow 0^+}\sup\abs{Du_\varepsilon}_F(\R^N) \leq \abs{Du^\tau}_F(\R^N).
\end{equation}

Let us choose $\varphi \in C^{\infty}_0(\R^N,\R^N)$ with $F^o(\varphi)\leq 1$ and calculate 
\begin{equation}
\label{diseg_alto}
\int_{\R^N}u_\varepsilon \ast \dive ( \varphi)dx\int_{\R^N}u^\tau(\eta_\varepsilon \ast \dive \varphi)dx=\int_{\R^N}u^\tau\dive(\eta_\varepsilon \ast  \varphi)dx\leq \abs{Du^\tau}_F(\R^N),
\end{equation}
where the inequality in the last term holds since $F^o(\eta_\varepsilon \ast \varphi)\le 1$. Indeed, by using the $1$-homogeneity of $F^o$ and Jensen's Inequality (see, for instance, \cite[Lemma 1.8.2]{Jensen}), we gain that
\[
F^o\left(\int_{\R^N} \eta_{\epsilon}(x-y)\varphi(y)dy\right)
\leq \int_{\R^N} F^o(\eta_{\epsilon}(x-y)\varphi(y))dy=\int_{\R^N} \eta_{\varepsilon}(x-y)F^o(\varphi(y))dy\le 1.
\]
Hence, by passing to the limit in \eqref{dis_anis_alto}, we reach the inequality \eqref{diseg_alto} by the arbitrariness of $\varphi$.
\end{proof}

\section{The first Robin eigenvalue of the Finsler $p$-Laplacian as $p\to 1$}
\label{eig_sec}
In this Section, we give an application of the results proved above to a Robin eigenvalue problem. More precisely, our aim is analyze the $\Gamma$-limit of the functional 
\begin{equation}
\label{functional_Jp}
J_p(\varphi)=\dfrac{\ds \int_\Omega F^p(\nabla\varphi)dx+\beta\ds\int_{\partial \Omega} \abs{\varphi}^pF(\nu)d\mathcal{H}^{N-1}}{\ds\int_\Omega \abs{\varphi}^pdx},\quad \varphi\in W^{1,p}(\Omega)\setminus\{0\},
\end{equation}
where $\Omega$ is a bounded, connected, sufficiently smooth open set, $p>1$ and $\beta\in\R$, and prove an isoperimetric inequality for the limit, as $p\to 1^+$, of the first eigenvalue
\begin{equation}
\label{lambda_p}
\lambda_1(\Omega,p,\beta)=\inf_{\substack{\varphi\in W^{1,p}(\Omega)\\ \varphi \ne0}}J_p(\varphi),
\end{equation}
 depending on the value of the parameter $\beta$. A key point for proving this result is the convergence of the functional $J_p$.%, defined in \eqref{functional_Jp} and \eqref{functional_J}, respectively. %So we split the section by analysing the $\Gamma$-convergence, and then we prove the isoperimetric inequality.

%In this Section, we recall some properties of the Robin first eigenvalue (both in the cases $p>1$ and $p=1$) and give a useful characterization through a minimizer on the characteristic function. This gives the link to treat the problem throughout the subsets of $\Omega$.

%\paragraph{The case $p>1$}
%we analyze the $\Gamma$-limit of the functional 
%\begin{equation}
%\label{functional_Jp}
%J_p(u)=\dfrac{\ds \int_\Omega F^p(\nabla u)dx+\beta\ds\int_{\partial \Omega} \abs{u}^pF(\nu)d\mathcal{H}^{N-1}}{\ds\int_\Omega \abs{u}^pdx}
%\end{equation}
%where $u\in W^{1,p}(\Omega)$ and $u\ne0$. We set 

We first recall the following existence result for \eqref{lambda_p} holds.
\begin{thm}[\cite{fragavitone,della2022sharp}]
\label{existence}
Let $p>1$, $\beta\in\R$ and $\Omega$ bounded Lipschitz domain. Then there exists a minimum $u \in C^{1,\alpha}(\Omega)\cap C(\overline{\Omega})$ of (\ref{lambda_p}) that satisfies
\begin{equation}
\begin{cases}
-\mathcal Q_p u=\lambda(\Omega,p,\beta)\abs{u}^{p-2}u \;&\textrm{in}\;\Omega\\
F^{p-1}(\nabla u) F_\xi(\nabla u)\cdot \nu +\beta F(\nu)\abs{u}^{p-2}u=0\;&\textrm{on}\;\partial \Omega.
    \end{cases}
\end{equation}
Moreover, $u$ does not change sign in $\Omega$. Finally, $\lambda_1(\Omega,p,\beta)$ is positive if $\beta>0$, while is negative if $\beta<0$.
\end{thm}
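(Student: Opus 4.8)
The plan is to argue by the direct method in the calculus of variations, and then to read off the remaining properties from the Euler--Lagrange system together with the cited regularity theory for quasilinear operators. Since $J_p$ is $0$-homogeneous, I would first normalize and minimize the numerator
\[
\mathcal F_p(\varphi)=\int_\Omega F^p(\nabla\varphi)\,dx+\beta\int_{\partial\Omega}\abs{\varphi}^pF(\nu)\,d\mathcal H^{N-1}
\]
over the constraint set $\set{\varphi\in W^{1,p}(\Omega):\ \int_\Omega\abs{\varphi}^p\,dx=1}$, and take a minimizing sequence $\{u_n\}$. The crucial preliminary point is to show that $\{u_n\}$ is bounded in $W^{1,p}(\Omega)$: by \eqref{eq:lin} the bound $F(\xi)\ge a\abs{\xi}$ lets the gradient term control $\norm{\nabla u_n}_{L^p}^p$, but when $\beta<0$ the boundary term is negative and must be absorbed.

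This coercivity for $\beta<0$ is the main obstacle. I would establish a trace interpolation inequality in $W^{1,p}(\Omega)$, the smooth $W^{1,p}$-analogue of Proposition \ref{trace_prop_1}, of the form
\[
\int_{\partial\Omega}\abs{\varphi}^pF(\nu)\,d\mathcal H^{N-1}\le \eps\int_\Omega F^p(\nabla\varphi)\,dx+c(\eps)\int_\Omega\abs{\varphi}^p\,dx,
\]
valid for every $\eps>0$; choosing $\eps<\abs{\beta}^{-1}$ absorbs the negative boundary contribution and bounds $\mathcal F_p(u_n)$ below by $\tfrac12\int_\Omega F^p(\nabla u_n)\,dx-C$ on the constraint set, giving the desired $W^{1,p}$ bound. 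Once $\{u_n\}$ is bounded, reflexivity of $W^{1,p}$ (here $p>1$ is essential) yields a subsequence $u_n\rightharpoonup u$; the compact embedding $W^{1,p}(\Omega)\hookrightarrow\hookrightarrow L^p(\Omega)$ and the compactness of the trace operator $W^{1,p}(\Omega)\to L^p(\partial\Omega)$ ensure that the constraint and the boundary term pass to the limit, while the convexity of $\xi\mapsto F^p(\xi)$ (guaranteed by \eqref{strong} together with the monotonicity of $t\mapsto t^p$) gives weak lower semicontinuity of the gradient term. Hence $u$ attains the minimum.

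To obtain the Euler--Lagrange system I would compute the first variation of $\mathcal F_p$ at $u$ in a direction $\psi\in W^{1,p}(\Omega)$, using $\nabla_\xi[F^p](\nabla u)=pF^{p-1}(\nabla u)F_\xi(\nabla u)$; dividing by $p$ and integrating by parts splits the identity into an interior term yielding $-\mathcal Q_p u=\lambda_1(\Omega,p,\beta)\abs{u}^{p-2}u$ and a boundary term yielding the Robin condition $F^{p-1}(\nabla u)F_\xi(\nabla u)\cdot\nu+\beta F(\nu)\abs{u}^{p-2}u=0$. The regularity $u\in C^{1,\alpha}(\Omega)\cap C(\overline\Omega)$ then follows from the interior and boundary regularity theory for the anisotropic $p$-Laplacian cited in the statement. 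For the sign of $u$, I would use that $F$ is even, so $\nabla\abs{u}=\pm\nabla u$ a.e. forces $F(\nabla\abs{u})=F(\nabla u)$ a.e. and thus $J_p(\abs u)=J_p(u)$; hence $\abs u$ is also a minimizer, and the strong maximum principle (Harnack inequality) applied to the nonnegative solution $\abs u$ gives $\abs u>0$ in $\Omega$, so $u$ does not change sign.

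Finally, the sign of $\lambda_1$ is read off from explicit test functions. For $\beta<0$, testing with $\varphi\equiv 1$ gives
\[
\lambda_1(\Omega,p,\beta)\le J_p(1)=\frac{\beta\int_{\partial\Omega}F(\nu)\,d\mathcal H^{N-1}}{\abs{\Omega}}<0 .
\]
For $\beta>0$ both terms of the numerator are nonnegative, so $\lambda_1\ge 0$; and if $\lambda_1=0$ the minimizer would satisfy $\int_\Omega F^p(\nabla u)\,dx+\beta\int_{\partial\Omega}\abs{u}^pF(\nu)\,d\mathcal H^{N-1}=0$, forcing $\nabla u\equiv 0$ and $u|_{\partial\Omega}=0$, hence $u\equiv 0$, a contradiction. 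Thus $\lambda_1(\Omega,p,\beta)>0$ when $\beta>0$.
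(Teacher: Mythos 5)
The paper does not actually prove Theorem \ref{existence}: it is recalled verbatim from \cite{fragavitone,della2022sharp}, so there is no internal proof to compare against, and the right benchmark is the standard argument in those references --- which is essentially what you have written. Your direct-method scheme is correct: normalization on the constraint $\norm{\varphi}_{L^p}=1$, coercivity, weak lower semicontinuity of the convex gradient term, compactness of the embedding $W^{1,p}(\Omega)\hookrightarrow L^p(\Omega)$ and of the trace into $L^p(\partial\Omega)$ (compact precisely because $p>1$), first variation for the Euler--Lagrange system, citation of quasilinear regularity theory, the reflection $u\mapsto\abs{u}$ plus Harnack for the sign of $u$, and explicit test functions for the sign of $\lambda_1$. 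You correctly identify the one genuinely delicate point, coercivity for $\beta<0$, and your $\eps$-trace interpolation inequality is the standard device; for completeness you should justify it, e.g.\ by applying a $W^{1,1}$ trace inequality (the smooth analogue of Propositions \ref{trace_prop_Q} or \ref{trace_prop_1}) to the function $\abs{\varphi}^p$ and then Young's inequality in the form $p\abs{\varphi}^{p-1}F(\nabla\varphi)\le\delta F^p(\nabla\varphi)+c(\delta)\abs{\varphi}^p$, which yields an arbitrarily small coefficient in front of the gradient term even when the trace constant exceeds $1$; note this is exactly where $p>1$ is used. Two further small points: connectedness of $\Omega$ (assumed throughout the paper) is needed both to upgrade $\abs{u}>0$ in $\Omega$ to ``$u$ does not change sign'' and to conclude $u\equiv 0$ from $\nabla u\equiv 0$ with vanishing trace in your argument that $\lambda_1>0$ for $\beta>0$; and the Harnack inequality you invoke for the nonnegative minimizer $\abs{u}$ should be the one for quasilinear equations of $p$-Laplacian type (Trudinger--Serrin), which applies here thanks to \eqref{eq:lin} and \eqref{strong}. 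With these justifications made explicit, your proposal is a complete and faithful reconstruction of the proof the paper delegates to the literature.
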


\subsection{The case $p=1$}
In order to study the limit case of $J_p$ as $p$ goes to $1$, we consider the functional
\begin{equation}
    \label{functional_J}
J(\varphi)=\dfrac{\abs{D\varphi}_F(\Omega)+\min\{\beta,1\}\ds \int_{\partial \Omega}\abs{\varphi}F(\nu)d\mathcal H^{N-1}}{\ds\int_\Omega \abs{\varphi}dx},
\end{equation}
where $\varphi\in BV(\Omega)$ and $u\not \equiv0$. Hence, we study the associated minimum problem
\begin{equation}
    \label{Lambda}
\Lambda(\Omega,\beta)=\inf_{\substack{\varphi \in BV(\Omega)\\ \varphi \not\equiv 0}}J(\varphi).
\end{equation}

Depending on $\beta$, we will impose different assumptions on the regularity of the domain.  Indeed:
\begin{itemize}
    \item if $\beta\ge 0$, we will suppose that $\partial \Omega$ is Lipschitz;
    \item if $-1<\beta<0$, we will assume that $\partial \Omega$ is $C^2$.
\end{itemize}
 In particular, this difference depends on the fact that in the case $\beta<0$ 
 we use the trace inequality, studied in Section \ref{trace_sec}.

Finally, if $\beta\le -1$ the problem is not well posed; indeed if $\beta<-1$, then $\Lambda(\Omega,\beta)=-\infty$ while if $\beta=-1$, $\Lambda$ is finite but can be not achieved, also in the case of smooth domains. For further details, we refer the reader to the Euclidean case treated in \cite{della2022behavior}.

Let us discuss the presence of the term $\min\{\beta,1\}$ in \eqref{functional_J}. For any value of $\beta$, it could seem more natural to study the problem
\begin{equation}
\label{lambda_1}
\lambda(\Omega,1,\beta)=\inf_{\substack{\varphi \in BV(\Omega)\\ \varphi\neq 0}} \dfrac{\abs{D\varphi}_F(\Omega)+\beta\ds \int_{\partial \Omega}\abs{\varphi}F(\nu)d\mathcal H^{N-1}}{\ds\int_\Omega \abs{\varphi}dx}.
\end{equation}
Actually, we have that for $\beta\ge 1$ it holds
\[
\lambda(\Omega,1,\beta)=\Lambda(\Omega,\beta)=h_F(\Omega),
\]
where $h_F(\Omega)$ is the first Cheeger constant of $\Omega$ in the Finsler setting (see e.g. \cite{BoundaryTrace}): 
\begin{equation}
\label{cheeger_def}
h_F(\Omega)=\inf_{\substack{\varphi \in BV(\Omega)\\ \varphi \not\equiv 0}}\dfrac{\abs{D\varphi}_F(\R^N)}{\ds\int_\Omega \abs{\varphi}dx}=\inf_{\substack{E\subseteq \Omega}}\dfrac{P_F(E;\Omega)}{\ds\abs{E}}.
\end{equation}
Indeed, in this case, it is immediate to see that 
\[
\lambda(\Omega,1,\beta)\geq h_F(\Omega).
\]
On the other hand, if $u$ is a minimizer of (\ref{lambda_1}), then, by Theorem \ref{approssimazione_thm}, there exists $u_k \in C^{\infty}_0 (\Omega)$ such that 
\[
u_k \xrightarrow{L^q}u, \qquad\qquad \abs{\abs{\nabla u_k}}_{L^1(\Omega)}\xrightarrow{L^1} \abs{D u}_F(\R^N),
\]
for any $q \leq \dfrac{N}{N-1}$. Therefore
\[
\lambda (\Omega, 1, \beta)\leq    \lim_{k\rightarrow+\infty} \dfrac{\ds\int_\Omega F(\nabla u_k)dx}{\ds\int_\Omega \abs{u_k}dx}=h_F(\Omega).
\]

Now we focus on the possibility of studying the minimization problem (\ref{Lambda}) restricting our analysis to characteristic functions. Hence if $E\subseteq \Omega$, we have
\begin{align*}
J(\chi_E)%&=\dfrac{\ds\int_{\Omega\cap \partial E}F(\nu_E)+\min\{1,\beta\}\int_{\partial \Omega\cap \partial E}F(\nu_E)d\mathcal{H}^{N-1}}{\int_\Omega \chi_E}\\&
=\dfrac{P_F(E;\Omega)+\min\{1,\beta\}\ds\int_{\partial \Omega\cap \partial^* E}F(\nu_E)d\mathcal{H}^{N-1}}{\abs{E}}.
\end{align*}
By denoting 
\[
R(E,\beta):=J(\chi_E),
\]
we consider the minimization problem  
\begin{equation}
    \label{ell_problem}
\ell(\Omega,\beta)=\inf_{E\subseteq \Omega}R(E,\beta).
\end{equation}
Before proving the equivalence between problems \eqref{Lambda} and \eqref{ell_problem}, we need the following result on the lower semicontinuity of the numerator of the functional $J$.
\begin{lem}\label{prop_modica_salto}
Let $\beta \ge -1$. The functional
\[
G(u)=|Du|_F(\Omega)+\min\{1,\beta\}\int_{\partial\Omega}|u|F(\nu)d\mathcal H^{N-1}
\]
is lower semicontinuous on $BV(\Omega)$ with respect to the topology of $L^1(\Omega)$.
\end{lem}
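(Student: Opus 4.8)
The plan is to split according to the sign of $\beta$, since $\min\{1,\beta\}$ is nonnegative precisely when $\beta\ge0$. Throughout I extend any $u\in BV(\Omega)$ by zero to $\bar u\in BV(\R^N)$ and recall from \eqref{salto_variazione} that $|D\bar u|_F(\R^N)=|Du|_F(\Omega)+\int_{\partial\Omega}|u|F(\nu)\,d\mathcal H^{N-1}$. Two functionals are manifestly lower semicontinuous with respect to $L^1(\Omega)$-convergence: the map $u\mapsto|Du|_F(\Omega)$, being the supremum over $g\in C^1_0(\Omega;\R^N)$ with $F^o(g)\le1$ of the $L^1(\Omega)$-continuous linear functionals $u\mapsto\int_\Omega u\,\dive g\,dx$; and the map $u\mapsto|D\bar u|_F(\R^N)$, because $u_k\to u$ in $L^1(\Omega)$ forces $\bar u_k\to\bar u$ in $L^1(\R^N)$ and the anisotropic total variation on $\R^N$ is lower semicontinuous for the same reason.

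For $\beta\ge0$ set $c:=\min\{1,\beta\}\in[0,1]$. Using \eqref{salto_variazione} I rewrite
\[
G(u)=(1-c)\,|Du|_F(\Omega)+c\,|D\bar u|_F(\R^N),
\]
a combination with nonnegative coefficients of the two lower semicontinuous functionals above; hence $G$ is lower semicontinuous on $BV(\Omega)$ in this regime (for $\beta\ge1$ one simply has $G=|D\bar u|_F(\R^N)$).

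The delicate case is $-1\le\beta<0$, where $\min\{1,\beta\}=\beta<0$ and the decomposition has a negative coefficient; indeed for $\beta<-1$ the statement is false (mass escaping to $\partial\Omega$ yields a sequence with $\liminf_k G(u_k)<G(u)$), so the argument must exploit the sharpness of the constant $1$ in Proposition~\ref{trace_prop_1}. My plan is to transfer the boundary term to the interior by means of the identity on which that proposition rests. Writing $\beta=-|\beta|$ with $0<|\beta|\le1$ and applying to $v:=|u|\in BV(\Omega)$ the integration by parts in anisotropic normal coordinates from the proof of Proposition~\ref{trace_prop_1} (extended from smooth $v$ to $BV$ via Proposition~\ref{approximation_BV} and the continuity of the trace under strict convergence), I obtain the representation
\[
\int_{\partial\Omega}|u|\,F(\nu)\,d\mathcal H^{N-1}=\int_\Omega h\,|u|\,dx-\int_\Omega \Xi\cdot dD|u|,
\]
where $h\in L^\infty(\Omega)$ is the fixed function built from $-\Delta_F d_F$ and $\chi_{\Omega_{\mu/2}}$ (bounded thanks to \eqref{boundmc}) and $\Xi=\tfrac2\mu F_\xi(\nabla d_F)(\tfrac\mu2-d_F)^+$ satisfies $F^o(\Xi)\le1$. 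Substituting gives
\[
G(u)=\Big[\,|Du|_F(\Omega)+|\beta|\int_\Omega \Xi\cdot dD|u|\,\Big]-|\beta|\int_\Omega h\,|u|\,dx,
\]
whose second term is $L^1(\Omega)$-continuous (integral of $|u|$ against a fixed $L^\infty$ weight), while the bracketed term is nonnegative, bounded below by $(1-|\beta|)\,|Du|_F(\Omega)$ through \eqref{prodscal} and $\abs{D\abs{u}}_F\le\abs{Du}_F$.

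The main obstacle is exactly this bracketed term: since $\Xi$ does not have compact support in $\Omega$ — by \eqref{dentro_fuori} one computes $\Xi\cdot\nu=-F(\nu)$ on $\partial\Omega$ — the pairing $\int_\Omega\Xi\cdot dD|u|$ is not $L^1$-continuous, and the boundary trace remains hidden inside it. I would resolve this by localizing: replace $\partial\Omega$ by the inner level sets $\Sigma_s=\{d_F=s\}$ and average over $s\in(0,\sigma)$ using Theorem~\ref{change_normal}, which converts the inner traces into interior volume integrals (hence $L^1$-continuous) at the price of a collar term $|Du|_F(\{0<d_F<\sigma\})$ carrying coefficient $|\beta|\le1$. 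The leftover interior total variation supplies the nonnegative budget $(1-|\beta|)\,|Du|_F(\Omega)$ that absorbs this negative collar contribution; combining the $L^1$-continuous pieces with the genuinely lower semicontinuous interior total variation and letting $\sigma\to0^+$ yields $\liminf_k G(u_k)\ge G(u)$. The threshold $\beta=-1$ is precisely where this budget vanishes, in agreement with the failure of the statement for $\beta<-1$.
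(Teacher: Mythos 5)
Your $\beta\ge 0$ argument is correct, and in fact more careful than the paper's own treatment of that regime: the paper merely asserts that for $\beta\ge 0$ lower semicontinuity ``follows immediately by the lower semicontinuity of each term'', which is loose, since the trace term $u\mapsto\int_{\partial\Omega}\abs{u}F(\nu)\,d\mathcal H^{N-1}$ is \emph{not} by itself lower semicontinuous under $L^1$ convergence (boundary layers kill it). Your decomposition $G(u)=(1-c)\abs{Du}_F(\Omega)+c\,\abs{D\ol u}_F(\R^N)$ with $c=\min\{1,\beta\}\in[0,1]$, via \eqref{salto_variazione}, is the right way to make that case rigorous, and is exactly the splitting the paper itself deploys later (proof of Theorem \ref{Lambda=ell_thm}, case $0<\beta<1$). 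Your representation identity for the trace, with $\Xi=\tfrac2\mu F_\xi(\nabla d_F)(\tfrac\mu2-d_F)^+$ satisfying $F^o(\Xi)\le 1$ by \eqref{FF0xi} and $\Xi\cdot\nu=-F(\nu)$ on $\partial\Omega$ by \eqref{dentro_fuori}, is also correct (boundedness of $h$ uses the full two-sided curvature bound available for $C^2$ domains, not just the one-sided bound \eqref{boundmc}, but that is harmless).

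For $-1\le\beta<0$, however, what you give is a plan rather than a proof, and the gap sits exactly where you locate it. Having conceded that the bracketed term $\abs{Du}_F(\Omega)+\abs{\beta}\int_\Omega\Xi\cdot dD\abs{u}$ is not lower semicontinuous, your fix --- averaging over the inner level sets $\Sigma_s=\{d_F=s\}$ --- rests on two assertions that are never established: (i) that the discrepancy between the boundary trace of $u_k$ and its averaged inner traces is controlled by $\abs{Du_k}_F(\{0<d_F<\sigma\})$ with constant \emph{exactly} $1$, plus an $L^1$ collar term; this is itself a sharp trace-comparison inequality, not a consequence of Theorem \ref{change_normal} alone, since the Jacobian \eqref{jacobian} and the curvature corrections must be absorbed as in \eqref{boundmc} --- it is, in substance, Proposition \ref{trace_prop_1} localized to the collar; and (ii) that for the fixed limit function $u$ the inner traces on $\Sigma_s$ converge to the boundary trace as $\sigma\to 0^+$, which you need to recover $G(u)$ at the end. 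Both are true and fillable, but the ``budget absorbs the collar'' step is asserted, not proved. Note also that the whole detour through $\Xi$ and $h$ can be bypassed: the paper closes this case by applying \eqref{trace_inequality_1} not to $u$ and $u_k$ separately but to the \emph{difference}, localized by a cutoff $v_\delta=(1-\chi_\delta)(u-u_k)$ supported in the collar $\Omega_\delta$. The elementary bound ($\abs{\beta}\le 1$ entering here)
\begin{equation*}
\beta\int_{\partial\Omega}\bigl(\abs{u}-\abs{u_k}\bigr)F(\nu)\,d\mathcal H^{N-1}\le\int_{\partial\Omega}\abs{u-u_k}\,F(\nu)\,d\mathcal H^{N-1}
\end{equation*}
reduces everything to the trace of $u-u_k$, which \eqref{trace_inequality_1} bounds by $\abs{Du}_F(\Omega_\delta)+\abs{Du_k}_F(\Omega_\delta)+C_\delta\norm{u-u_k}_{L^1(\Omega_\delta)}$ for a.e.\ $\delta$; lower semicontinuity of $\abs{D\cdot}_F(\Omega\setminus\Omega_\delta)$ then yields $\limsup_k\,[G(u)-G(u_k)]\le 2\abs{Du}_F(\Omega_\delta)\to 0$ as $\delta\to 0^+$. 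Working with the difference in this way eliminates precisely the two unproved steps (i) and (ii) of your scheme; if you complete your route by proving (i) via Proposition \ref{trace_prop_1} on the collar, you essentially reproduce the paper's argument at the extra cost of (ii).
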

\begin{proof}
If $\beta\ge 0$, the lower semicontinuity (with $\Omega$ Lipschitz) follows immediately by the lower semicontinuity of each term. Then we assume $\beta <0$ (and $\Omega$ in $C^2$). The proof is an adaptation of \cite[Proposition 1.2]{modica1987gradient} to the Finsler case.

Let $u\in BV(\Omega)$, and let us consider a sequence $\{u_k \}_{k\in\N}\subseteq BV(\Omega)$ converging to $u$ in $L^1(\Omega)$; we have the following estimate
\begin{equation}
    \label{modica_e1}
G(u)-G(u_k)\leq |Du|_F(\Omega)-|Du_k|_F(\Omega)+\int_{\partial\Omega}|u-u_k|F(\nu)d\mathcal H^{N-1}.
\end{equation}
Now, for a fixed $\delta>0$, let us define $\Omega_\delta=\{x\in\Omega :  d_\mathcal E(x)< \delta\}$, where $d_\mathcal E$ is the standard Euclidean distance to the boundary of $\Omega$ ; moreover let us consider  $v_\delta=(1-\chi_\delta)(u-u_k)$, where $\chi_\delta$ is a cut-off function such that $\chi_\delta=1$ in $\Omega\setminus\Omega_\delta$ and $|\nabla\chi_\delta|\le \frac{2}{\delta}$ in $\Omega$. The trace inequality \eqref{trace_inequality_1} applied to $v_\delta$ gives
\begin{equation}
    \label{modica_e2}
\int_{\partial\Omega}|u-u_k|F(\nu)d\mathcal H^{N-1}\leq \abs{D (u-u_k)}_F(\Omega_\delta)+\frac{2b}{\delta}\int_{\Omega_\delta}|u-u_k|dx+c\int_{\Omega_\delta}|u-u_k|dx.
\end{equation}

Moreover, we have
\begin{equation}
    \label{modica_e3}
|D (u-u_k)|_F(\Omega_\delta)\le |D u|_F(\Omega_\delta)+|D u_k|_F(\Omega_\delta)+|D (u-u_k)|_F(\partial(\Omega\setminus\Omega_\delta)),
\end{equation}
but last term is zero on a set of $\delta$'s of positive measure because $u-u_k\in BV(\Omega)$, for all $k\in\N$. Hence, by \eqref{modica_e1}-\eqref{modica_e2}-\eqref{modica_e3}, we gain:
\[
G(u)-G(u_k)\leq |Du|_F(\Omega)+|D u|_F(\Omega_\delta)-|D u_k|_F(\Omega\setminus\Omega_\delta)+\left(\frac{2b}{\delta}+c\right)\int_{\Omega_\delta}|u-u_k|dx.
\]
By the lower semicontinuity of the functional $|D u_k|_F(\Omega\setminus\Omega_\delta)$ in $L^1(\Omega\setminus\Omega_\delta)$, we have that
\[
\limsup_{k\to+\infty}[ G(u)-G(u_k)]\le 2 |Du|_F(\Omega_\delta).
\]
The conclusion follows by sending $\delta\to 0^+$.
\end{proof}
At this stage, we state the main existence result of the minimum problem \eqref{Lambda}.
\begin{thm}\label{Lambda=ell_thm}
For any $\beta>-1$, there exists a minimum to problem (\ref{Lambda}). In particular, it holds
\[
\Lambda(\Omega,\beta)=\ell(\Omega,\beta).
\]
Moreover, if $u\in BV(\Omega)$ is a minimum of (\ref{Lambda}), then 
\[
\Lambda(\Omega, \beta)=R(\{u>t\},\beta),
\]
for some $t \in \R$.
\end{thm}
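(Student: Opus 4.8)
The plan is to establish the three assertions in the order: existence of a minimizer, the identity $\Lambda(\Omega,\beta)=\ell(\Omega,\beta)$, and finally the optimality of a superlevel set of any minimizer. The engine for the last two is the anisotropic coarea formula applied to the whole numerator, while existence rests on the direct method together with the lower semicontinuity of Lemma \ref{prop_modica_salto} and, in the delicate range $-1<\beta<0$, on the trace inequality \eqref{trace_inequality_1}. Throughout I set $\alpha:=\min\{1,\beta\}$, noting that $\alpha=\beta$ when $-1<\beta<0$. A first observation is that one may restrict to nonnegative competitors: since $|D|u||_F(\Omega)\le|Du|_F(\Omega)$ while the boundary term and the denominator are unchanged when $u$ is replaced by $|u|$, we have $J(|u|)\le J(u)$, so in particular $|u|$ is a minimizer whenever $u$ is.

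For existence I would take a minimizing sequence $\{u_k\}$, nonnegative and normalized by $\int_\Omega u_k\,dx=1$, so that $J(u_k)=G(u_k)\to\Lambda(\Omega,\beta)$. The point is to bound $|Du_k|_F(\Omega)$. When $\beta\ge0$ this is immediate from $|Du_k|_F(\Omega)\le G(u_k)$. When $-1<\beta<0$ the boundary term carries the negative weight $\alpha$, and I would insert \eqref{trace_inequality_1}: multiplying $\int_{\partial\Omega}u_k\,F(\nu)\,d\mathcal H^{N-1}\le|Du_k|_F(\Omega)+c$ by $\alpha<0$ and rearranging gives $(1+\alpha)|Du_k|_F(\Omega)\le G(u_k)+|\alpha|c$, where $1+\alpha>0$ precisely because $\beta>-1$. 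This simultaneously proves that $\Lambda(\Omega,\beta)$ is finite and bounds the $BV$-norms. The compact embedding $BV(\Omega)\hookrightarrow L^1(\Omega)$ then yields a subsequence converging in $L^1$ to some $u$ with $\int_\Omega u\,dx=1$, hence $u\not\equiv0$, and Lemma \ref{prop_modica_salto} gives $G(u)\le\liminf_k G(u_k)=\Lambda(\Omega,\beta)$, so $u$ is a minimizer.

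For the identity $\Lambda=\ell$ the key step is a layer-cake decomposition of the whole numerator. For nonnegative $u\in BV(\Omega)$ I would combine the anisotropic coarea formula $|Du|_F(\Omega)=\int_0^\infty P_F(\{u>t\};\Omega)\,dt$, the ordinary Cavalieri identity $\int_\Omega u\,dx=\int_0^\infty|\{u>t\}|\,dt$, and the coarea form of the trace term, $\int_{\partial\Omega}u\,F(\nu)\,d\mathcal H^{N-1}=\int_0^\infty\int_{\partial\Omega\cap\partial^*\{u>t\}}F(\nu)\,d\mathcal H^{N-1}\,dt$, the latter obtained from $T\chi_{\{u>t\}}=\chi_{\{Tu>t\}}$ a.e.\ together with the identity $\int_{\partial\Omega}T\chi_E\,F(\nu)\,d\mathcal H^{N-1}=\int_{\partial\Omega\cap\partial^*E}F(\nu)\,d\mathcal H^{N-1}$, which itself follows from \eqref{salto_variazione} applied to $\chi_E$. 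Writing $\mathcal G(E):=P_F(E;\Omega)+\alpha\int_{\partial\Omega\cap\partial^*E}F(\nu)\,d\mathcal H^{N-1}$ for the numerator of $R(E,\beta)$, this gives
\[
G(u)=\int_0^\infty\mathcal G(\{u>t\})\,dt,\qquad \int_\Omega u\,dx=\int_0^\infty|\{u>t\}|\,dt.
\]
Since $\mathcal G(E)\ge\ell(\Omega,\beta)\,|E|$ for every admissible $E$ (trivially when $|E|=0$, and by definition of $\ell$ when $|E|>0$, $\ell$ being finite by the same use of \eqref{trace_inequality_1} on $\chi_E$), integrating in $t$ yields $G(u)\ge\ell(\Omega,\beta)\int_\Omega u\,dx$, hence $J(u)\ge\ell(\Omega,\beta)$ and $\Lambda(\Omega,\beta)\ge\ell(\Omega,\beta)$; the reverse inequality is immediate because characteristic functions are admissible in \eqref{Lambda}.

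Finally, let $u\ge0$ be a minimizer. Using $\Lambda=\ell$ and the decomposition above,
\[
\int_0^\infty\big[\mathcal G(\{u>t\})-\Lambda(\Omega,\beta)\,|\{u>t\}|\big]\,dt=G(u)-\Lambda(\Omega,\beta)\int_\Omega u\,dx=0,
\]
and every integrand is nonnegative by $\mathcal G(E)\ge\Lambda(\Omega,\beta)\,|E|$; hence it vanishes for a.e.\ $t$, and choosing any $t$ with $|\{u>t\}|>0$ (such $t$ fill a set of positive measure since $u\not\equiv0$) gives $R(\{u>t\},\beta)=\Lambda(\Omega,\beta)$. I expect the main obstacle to be the coarea decomposition of the boundary trace term, i.e.\ justifying the compatibility of the $BV$-trace with superlevel sets so that the trace contribution splits exactly into the boundary terms of $\mathcal G(\{u>t\})$; the remainder is the standard mediant/Cavalieri machinery, with the trace inequality \eqref{trace_inequality_1} carrying the essential burden of controlling the negative boundary contribution when $-1<\beta<0$.
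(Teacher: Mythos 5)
Your proposal is correct and follows essentially the same route as the paper: the direct method with the trace inequality \eqref{trace_inequality_1} and the lower semicontinuity of Lemma \ref{prop_modica_salto} for existence, the coarea/layer-cake decomposition with the mediant-type bound $\mathcal G(E)\ge \ell(\Omega,\beta)|E|$ for the identity $\Lambda=\ell$, and the vanishing nonnegative integrand for the superlevel-set claim. Your version is in fact slightly tidier than the paper's: you reduce to $u\ge 0$ so the Cavalieri integrals over $[0,\infty)$ are literally correct (the paper writes $\int_{-\infty}^{+\infty}|\{u>t\}|\,dt$, which only makes sense for nonnegative $u$), you justify the coarea splitting of the boundary trace term via \eqref{salto_variazione} rather than leaving it implicit, and you invoke Lemma \ref{prop_modica_salto} uniformly in $\beta$ instead of the paper's separate semicontinuity arguments for $\beta\ge 1$, $0<\beta<1$ and $-1<\beta<0$.
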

\begin{proof}
Let $u_n$ be a minimizing sequence in $BV(\Omega)$ of \eqref{Lambda}, such that $\norm{u_n}_{L^1(\Omega)}=1$.
If $\beta>0$, then $u_n$ is bounded in $BV(\Omega)$ and hence 
\[
u_n\overset{\ast}{\rightharpoonup} u \;\textrm{in}\;BV(\Omega)\qquad\textrm{and}\qquad u_n \xrightarrow{L^1} u.
\] 
In particular, if $\beta \geq 1$, $J(u_n)=\abs{Du_n}_F(\R^N)$, by using the lower semicontinuity of the anisotropic total variation \cite{amar1994notion}, we obtain that
\[
J(u)\leq \liminf_{n}J(u_n).
\]
Hence $u$ is the minimum of the functional $J$.

If $0<\beta<1$, let $\Omega_\delta=\{x\in \Omega \ | \ d_\mathcal E(x)<\delta\}$, with $\delta>0$. We have 
\begin{align*}
\abs{Du_n}_F(\Omega)=\abs{Du_n}_F(\Omega \setminus \Omega_\delta)+\abs{Du_n}_F(\Omega_\delta)\geq\abs{Du_n}_F(\Omega \setminus \Omega_\delta)+\beta\abs{Du_n}_F(\Omega_\delta)
\end{align*}
and hence
\[
J(u_n)\geq \abs{Du_n}_F(\Omega\setminus\Omega_\delta)+\beta\left[\abs{Du_n}_F( \Omega_\delta)+\int_{\partial\Omega}\abs{u_n}F(\nu)d\mathcal H^{N-1}\right].
\]
Moreover, by the lower semicontinuity of $J$, we have
\[
\liminf_n J(u_n)\geq \abs{Du_n}_F(\Omega\setminus\Omega_\delta)+\beta\abs{Du_n}_F(\R^N\setminus (\Omega\setminus\Omega_\delta)).
\]
By using the fact that $u\in BV(\Omega)$, we obtain that
\[
\liminf_n J(u_n)\geq J(u),
\]
as $\delta\rightarrow 0$.

Now, let us take $-1<\beta<0$. It easily seen that $J(u_n)\leq C$. Using the trace inequality (\ref{trace_inequality_1}), we obtain 
\[
J(u_n)\geq (1+\beta)\abs{Du_n}_F(\Omega)+c\beta\geq c\beta
\]
and
\[
\abs{Du_n}_F(\Omega)\leq \frac{C}{1+\beta}-\frac{\beta c}{1+\beta}.
\]
Being $u_n \in BV(\Omega)$ and by the fact that the functional $J$ is lower semicontinuous (proved in Lemma \ref{prop_modica_salto}), we have that $u$ is a minimum of $J$.

Now, we want to prove last part of the Theorem. Obviously, we have 
\[
\Lambda(\Omega, \beta)\leq \ell(\Omega, \beta).
\]
To prove the reverse inequality, we take $u\in BV(\Omega)$ a minimizer of (\ref{Lambda}). By using the coarea formula 
\[
\abs{D u}_F(\Omega)=\int_{-\infty}^{+\infty}P_F(\{u>t\},\Omega)dt,
\]
we have 
\begin{align*}
\Lambda_F(\Omega,\beta)&=\dfrac{\ds\int_{-\infty}^{+\infty}P_F(\{u>t\},\Omega)dt+\min\{\beta,1\}\int_{-\infty}^{+\infty}\mathcal{H}^{N-1}(\partial \Omega \cap \partial\{u>t\})F(\nu)dt}{\ds\int_{-\infty}^{+\infty}\abs{\{u>t\}}dt}\\
&=\dfrac{\ds\int_{-\infty}^{+\infty}R(\{u>t\},\beta)\abs{\{u>t\}}dt}{\ds\int_{-\infty}^{+\infty}\abs{\{u>t\}}dt}\\&\geq \inf_{E\subseteq \Omega}R(E,\beta)\\&=\ell(\Omega,\beta).
\end{align*}
This shows that $\Lambda(\Omega,\beta)=\ell(\Omega,\beta)$ and, in particular, we have that 
\[
\int_{-\infty}^{+\infty}\{R(\{u>t\},\beta)-\ell(\Omega,\beta)\}\abs{\{u>t\}}dt=0
\]
and using the definition of $\ell(\Omega,\beta)$ we observe that the integrand is nonnegative. In particular, $u\not\equiv 0$ and we have that $\Lambda(\Omega,\beta)=R(\{u>t\},\beta)$.
\end{proof}

%\section{An isoperimetric inequality for $\Lambda(\Omega,\beta)$}
%\label{isoperimetric_sec}

\subsection{$\Gamma$-convergence of $J_p$}
\label{convergence_sec}
Now we will prove that the functional $J_p$ $\Gamma-$converges to the functional $J$, as $p\to 1^+$.

\begin{defn}
A functional $J_p$ $\Gamma$-converges to $J$ as $p\to 1^+$ in the weak$^*$ topology of $BV(\Omega)$ if, for any $u\in BV(\Omega)$, the following hold:
\begin{enumerate}
\item[(i)]  For any sequence $u_p\in BV(\Omega)$ which converges to $u$ weak$^*$ in $BV(\Omega)$ as $p\to 1^+$, then
\begin{equation}
    \label{liminf_J}
\liminf_{p\to1^+} J_p(u_p)\geq J(u).
\end{equation}
\item[(ii)] There exists a sequence $u_p\in W^{1,p}(\Omega)$ which converges to $u$ weak$^*$ in $BV(\Omega)$ as $p\to 1^+$, such that
\begin{equation}
    \label{limsup_J}
\limsup_{p\to1^+} J_p(u_p)\leq J(u).
\end{equation}
\end{enumerate}
\end{defn}
Now, we are in position to prove the convergence theorem for the functional $J_p$.
\begin{thm}
\label{convergence_J}
Let $\beta>-1$, then $J_p$ $\Gamma$-converges to $J$ as $p\to 1^+$.
\end{thm}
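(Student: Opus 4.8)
The plan is to verify the two conditions in the definition of $\Gamma$-convergence separately, namely the liminf inequality (i) and the construction of a recovery sequence (ii). Since $J_p$ and $J$ are $0$-homogeneous, I would normalize every competitor by $\norm{u_p}_{L^1(\Omega)}=1$; because weak$^*$ convergence in $BV(\Omega)$ forces $u_p\to u$ in $L^1(\Omega)$ (compact embedding) together with a uniform bound on $\abs{Du_p}_F(\Omega)$, the embedding $BV\hookrightarrow L^{N/(N-1)}$ makes the family $\{\abs{u_p}^p\}$ equi-integrable for $p$ near $1$, so the denominators satisfy $\int_\Omega\abs{u_p}^p\,dx\to\int_\Omega\abs{u}\,dx=1$. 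Hence it suffices to compare the numerators, the quotient inequalities following because the denominator converges to a positive limit.

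For the liminf inequality (i) I would first reduce the $p$-powers to linear ones by Young's inequality: from $F(\nabla u_p)\le \frac1p F^p(\nabla u_p)+\frac1{p'}$ one gets $\int_\Omega F^p(\nabla u_p)\,dx\ge p\int_\Omega F(\nabla u_p)\,dx-\frac{p}{p'}\abs{\Omega}$, and the error vanishes as $p\to1^+$ since $\frac{p}{p'}=p-1\to0$; the same reduction applies on $\partial\Omega$ with $F(\nu)\,d\mathcal H^{N-1}$ in place of $dx$. The analysis then splits according to $\beta$. When $\beta\ge1$, I extend $u_p$ by zero (writing $\tilde u_p$ for the extension) and use $\beta\ge1$ together with \eqref{salto_variazione} to bound the numerator below by $p\,\abs{D\tilde u_p}_F(\R^N)-o(1)$, concluding by lower semicontinuity of $\abs{D\cdot}_F(\R^N)$ under $L^1(\R^N)$ convergence. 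When $0\le\beta<1$, I localize near the boundary exactly as in the proof of Theorem \ref{Lambda=ell_thm}: fixing $\delta>0$ and $\Omega_\delta=\{d_{\mathcal E}<\delta\}$, I write $\int_\Omega F^p(\nabla u_p)\ge\int_{\Omega\setminus\Omega_\delta}F^p(\nabla u_p)+\beta\int_{\Omega_\delta}F^p(\nabla u_p)$, group the $\beta$-terms with the boundary integral into a full-space variation on a neighbourhood $A$ of $\partial\Omega$, apply lower semicontinuity on $\Omega\setminus\overline{\Omega_\delta}$ and on $A$, and finally let $\delta\to0^+$.

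The case $-1<\beta<0$ is the main obstacle, since the boundary term is now subtracted and the trace is not weak$^*$ continuous, so lower semicontinuity cannot be applied termwise. Here I would instead show that the $p$-reduction degrades the numerator only by a vanishing amount, namely $\int_\Omega F^p(\nabla u_p)+\beta\int_{\partial\Omega}\abs{u_p}^pF(\nu)\ge \abs{Du_p}_F(\Omega)+\beta\int_{\partial\Omega}\abs{u_p}F(\nu)-o(1)$, controlling $\int_{\partial\Omega}(\abs{u_p}^p-\abs{u_p})F(\nu)$ by means of the uniform bound on the traces furnished by the trace inequality \eqref{trace_inequality_1} (after, if necessary, reducing to uniformly bounded $u_p$ by truncation, on which $t^p-t\to0$ uniformly). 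Once the numerator is bounded below by $G(u_p)=\abs{Du_p}_F(\Omega)+\beta\int_{\partial\Omega}\abs{u_p}F(\nu)$ up to $o(1)$, the conclusion $\liminf_p G(u_p)\ge G(u)$ is precisely the lower semicontinuity established in Lemma \ref{prop_modica_salto}, whose proof already encodes the trace inequality.

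For the recovery sequence (ii) I distinguish the same regimes. When $-1<\beta\le1$, so that $\min\{\beta,1\}=\beta$, I take $u_p=u$ whenever $u$ is regular enough to lie in $W^{1,p}(\Omega)$ and pass to the limit by dominated convergence, obtaining $J_p(u)\to J(u)$ directly; the general $u\in BV(\Omega)$ is reached by a smooth approximation that converges strictly in $\Omega$ and preserves the boundary trace, together with a diagonal argument. When $\beta\ge1$, the constant sequence $u_p=u$ would produce the coefficient $\beta$ rather than $\min\{\beta,1\}=1$, so instead I invoke the interior approximation Theorem \ref{approssimazione_thm}: choosing $u_k\in C_0^\infty(\Omega)$ with $u_k\to u$ in $L^q$ and $\abs{Du_k}_F(\R^N)\to\abs{Du}_F(\R^N)$, the vanishing trace kills the boundary integral, so $J_p(u_k)\to \abs{Du_k}_F(\R^N)/\int_\Omega\abs{u_k}\,dx$ as $p\to1^+$ and then $\to J(u)$ as $k\to\infty$ by \eqref{salto_variazione}; a diagonalization in $(k,p)$ produces a recovery sequence with $\limsup_p J_p(u_p)\le J(u)$.
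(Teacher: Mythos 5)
Your treatment of $\beta\ge 1$ (extension by zero, \eqref{salto_variazione}, lower semicontinuity of $\abs{D\cdot}_F(\R^N)$, recovery via Theorem \ref{approssimazione_thm} with diagonalization) and of $0\le\beta<1$ (Young-type reduction of the $p$-powers, then semicontinuity of the combined functional) matches the paper's proof in substance, as do both recovery constructions. The genuine gap is in the liminf inequality for $-1<\beta<0$. There you need, since $\beta<0$, the one-sided estimate $\int_{\partial\Omega}\abs{u_p}^pF(\nu)\,d\mathcal H^{N-1}\le\int_{\partial\Omega}\abs{u_p}F(\nu)\,d\mathcal H^{N-1}+o(1)$, and on the set where the trace exceeds $1$ the inequality $t^p\ge t$ goes the wrong way; to kill the excess $\int_{\partial\Omega}\abs{u_p}(\abs{u_p}^{p-1}-1)F(\nu)\,d\mathcal H^{N-1}$ you would need equi-integrability of the traces $\abs{u_p}^p$ on $\partial\Omega$. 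The trace inequality \eqref{trace_inequality_1} only yields a uniform bound of the form $\int_{\partial\Omega}\abs{u_p}^pF(\nu)\,d\mathcal H^{N-1}\le C$, i.e.\ an $L^p$-bound with exponent $p\to 1^+$, which does not give equi-integrability (and trying to get a fixed exponent $q>1$ via $W^{1,p}$-trace embeddings degenerates as $p\to1^+$). The fallback you propose, truncation, is not justified either: for $\beta<0$ truncating $u_p$ at level $M$ \emph{decreases} the gradient term while \emph{increasing} the (negative) boundary term and changing the denominator, so $J_p(u_p)\ge J_p(T_Mu_p)-o(1)$ is exactly what would need proof, and there is no evident reason it holds.

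The paper circumvents this by never comparing $\abs{u_p}^p$ with $\abs{u_p}$ on the boundary at all: it compares $J(u)$ with $J_p(u_p)$ using the auxiliary function $w_p:=\abs{u_p}^{p-1}u_p$, whose trace has modulus exactly $\abs{u_p}^p$. Applying \eqref{trace_inequality_1} to $(u-w_p)\psi$, with $\psi$ a cut-off supported in the strip $\Omega_\delta$, controls the difference of boundary terms by $\abs{Du}_F(\Omega_\delta)+\int_{\Omega_\delta}F(\nabla w_p)\,dx$ plus $L^1(\Omega_\delta)$-errors; the Young inequality \eqref{4.6}, $\int_\Omega F(\nabla w_p)\,dx\le\int_\Omega F^p(\nabla u_p)\,dx+(p-1)\int_\Omega\abs{u_p}^p\,dx$, converts the strip gradient term back into the numerator of $J_p$, and since $w_p\to u$ in $L^1(\Omega)$ the errors vanish as $p\to1^+$, leaving $\limsup_{p\to1^+}[J(u)-J_p(u_p)]\le 2\abs{Du}_F(\Omega_\delta)$, which tends to $0$ as $\delta\to0^+$. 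If you replace your boundary-comparison step with this device (your appeal to Lemma \ref{prop_modica_salto} is then no longer needed in this regime, though its proof uses the same strip-plus-trace-inequality mechanism), the rest of your argument goes through.
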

\begin{proof}
Let us suppose $u_p\in W^{1,p}(\Omega)$ and $||u_p||_{L^p(\Omega)}=1$.
We give the proof by distinguishing the possible values of $\beta$. In any cases, we will have to prove \eqref{liminf_J} and \eqref{limsup_J}.

\textit{The case $\beta \ge 1$.} 
Let us fix a sequence $u_p\in W^{1,p}(\Omega)$ weak$^*$ converging to $u$ in $BV(\Omega)$, as $p\to 1^+$.
By using the H\"older-type inequality contained for example in \cite[Proposition A.1]{dos}, we have:
%\[
%\begin{split}
%&\left(\int_{\Omega}F^p(\nabla u_p)dx+\int_{\partial\Omega}|u_p| F(\nu)^\frac 1p d\mathcal H^{N-1}\right)^p \\
%&\qquad 
%\le 
%\left(|\Omega|^{1-\frac 1p}\left(\int_\Omega F^p(\nabla u_p)dx\right)^\frac 1p +|\partial \Omega|^{1-\frac 1p}\left(\int_{\partial\Omega}|u_p|^p F(\nu) d\mathcal H^{N-1}\right)^\frac 1p \right)^p\\
%&\qquad \le 2^{p-1} \left(\int_{\Omega} F^p(\nabla u_p)dx\  |\Omega|^{p-1}+\beta\int_{\partial\Omega}|u_p|^p F(\nu) d\mathcal H^{N-1} |\partial \Omega|^{p-1}\right)\\
%&  \qquad\leq 2^{p-1}\max\{|\Omega|^{p-1}, |\partial\Omega|^{p-1}\}J_p(u_p).
%\end{split}
%\]
\[
\begin{split}
&\left(\int_{\Omega}F(\nabla u_p)dx+\int_{\partial\Omega}|u_p| F(\nu) d\mathcal H^{N-1}\right)^p \\
&\qquad 
\le 
\left(\int_{\Omega}F(\nabla u_p)^pdx+\int_{\partial\Omega}|u_p|^p F(\nu) d\mathcal H^{N-1}\right)\left(|\Omega|+P_F(\Omega)\right)^{p-1}.
\end{split}
\]
Hence we have
\[
\liminf_{p\to 1^+} J_p(u_p)\ge
|D u|_F(\Omega)+\int_{\partial\Omega}|u| F(\nu)d\mathcal H^{N-1}=|Du|_F(\R^N)=J(u).
\]
%where we used also \eqref{salto_variazione}. 
This proves \eqref{liminf_J}. 

To give the proof of \eqref{limsup_J}, we observe that, by Theorem \ref{approssimazione_thm}, there exists a sequence $\{u_k\}_{k\in\N}\subseteq C_0^\infty(\Omega)$ such that, for any $q\in [1,p]$, $u_k$ converges to $u$ in $L^q(\Omega)$ and $||F(\nabla u_k)||_{L^1(\Omega)}$ converges to $|Du|_F(\R^N)$ as $k\to +\infty$. 
Moreover, it easily seen that  that $||F(\nabla u_k)||_{L^p(\Omega)}$ converges to $||F(\nabla u_k)||_{L^1(\Omega)}$ and hence we have that there exists a subsequence $p_k \to 1^+$, as $k\to+\infty$, such that $||F(D u_k)||^{p_k}_{L^{p_k}(\Omega)}$ converges to $|Du|_F(\R^N)$ as $k\to+\infty$. This implies that $\limsup_{k\to+\infty}J_{p_k}(u_k)\ge J(u)$, that concludes the proof of \eqref{limsup_J}.

\textit{The case $0\leq\beta<1$.} Let us consider a sequence $u_p$ weak$^*$ converging to $u$ in $BV(\Omega)$, as $p\to 1^+$. 
A simple application of the Young inequality $a^p\geq pab-(p-1)b^{\frac p {p-1}}$ with $b=\frac 1p$, yields to
\[
\begin{split}
J_p(u_p)&=\int_\Omega F^p(\nabla u_p) dx+\beta \int_{\partial\Omega} |u_p|^pF(\nu) d\mathcal H^{N-1}\\
&\geq \int_\Omega F(\nabla u_p) dx+\beta \int_{\partial\Omega} |u_p|F(\nu) d\mathcal H^{N-1}-\frac{p-1}{p}\left(|\Omega|+P_F(\Omega)\right),
\end{split}
\]
Therefore, the conclusion \eqref{liminf_J} follows by applying the Proposition
\ref{prop_modica_salto}.

In order to get the second claim, Proposition \ref{approximation_BV} assures the existence of a sequence $u_k \in C^\infty(\Omega)$ strongly converging to $u$ in $L^1(\Omega)$ and $\|F(D u_k)\|_{L^1(\Omega)}$ converges to $|Du|_F(\Omega)$, as $k\to+\infty$. Moreover $u_k F(\nu)$ converges to $u F(\nu)$  in $L^1(\partial\Omega,\mathcal H^{N-1})$. An argument similar to the previous case leads us to say that $\|F(D u_k)\|_{L^{p_k}(\Omega)}$ converges to $|Du|_F (\Omega)$ and $\ds\int_{\partial\Omega}|u_k|^p F(\nu)d\mathcal{H}^{N-1}$ converges to $\ds\int_{\partial\Omega}|u|F(\nu)d\mathcal{H}^{N-1}$, as $k\to+\infty$.
Hence the sequence $\{u_k\}_{k\in\N}$ satisfies \eqref{limsup_J}.

\textit{The case $-1<\beta<0$.} 
Let us consider a sequence $u_p\in W^{1,p}(\Omega)$ weak$^*$ converging to $u$ in $BV(\Omega)$, as $p\to 1^+$.

For any $\delta>0$, let us set $\Omega_\delta=\{x\in\Omega :  d_\mathcal E(x)< \delta\}$ and consider a smooth function $\psi$ equal to zero in $\Omega\setminus\Omega_\delta$ and to one on $\partial\Omega$, such that $|\nabla\psi|\le \frac{2}{\delta}$.

The trace inequality \eqref{trace_inequality_1} applied to the function $v=(u-|u_p|^{p-1}u_p)\psi$ gives
\begin{equation}
    \label{stima_termine_bordo}
\begin{split}
&\int_{\partial\Omega}|u-|u_p|^{p-1}u_p| F(\nu)d \mathcal H^{N-1}\\
&\qquad \leq  |D(u-|u_p|^{p-1}u_p)|_F(\Omega_\delta)+\left(\frac{2b}{\delta}+c\right)\int_{\Omega_\delta} |u-|u_p|^{p-1}u_p| dx\\
& \qquad \leq  |Du|_F(\Omega_\delta)+\int_{ \Omega_\delta} F(\nabla (|u_p|^{p-1}u_p))dx+\left(\frac{2b}{\delta}+c\right)\int_{\Omega_\delta} |u-|u_p|^{p-1}u_p| dx,
\end{split}
\end{equation}
where we have used that $|D(u-|u_p|^{p-1}u_p)|_F(\partial\Omega_\delta)=0$ for a set of $\delta$'s of positive measure because $u-|u_p|^{p-1}u_p \in BV(\Omega)$.

By using \eqref{stima_termine_bordo}, we have
\begin{multline}
    \label{diff_J}
J(u)-J_p(u_p) =|Du|_F(\Omega)-\int_\Omega F^p(\nabla u_p)dx+\beta\int_{\partial\Omega} (|u|-|u_p|^{p-1}u_p)F(\nu)d\mathcal H^{N-1}\\
\le
|Du|_F(\Omega)-\int_\Omega F^p(\nabla u_p)dx+|\beta||Du|_F(\Omega_\delta) \\
\qquad+|\beta|\int_{\Omega_\delta} F(\nabla(|u_p|^{p-1}u_p))dx+|\beta|\left(\frac{2b}{\delta}+c\right)\int_{\Omega_\delta} |u-|u_p|^{p-1}u_p| dx:=A.
\end{multline}
Since $\frac{1}{|\beta|}>1$, we have
\begin{equation}
\label{stima_A}
    \begin{split}
A%&=A- |Du|_F(\Omega_\delta)+|Du|_F(\Omega_\delta)-\int_{\Omega\setminus \Omega_\delta} F(\nabla(|u_p|^{p-1}u_p))dx+\int_{\Omega\setminus \Omega_\delta} F(\nabla(|u_p|^{p-1}u_p))dx\\
&\leq 2 |Du|_F(\Omega_\delta) +|Du|_F(\Omega\setminus\Omega_\delta)-\int_\Omega F(\nabla u_p)^pdx+
\int_{\Omega\setminus\Omega_\delta} F(\nabla(|u_p|^{p-1}u_p))dx\\
&\qquad+\left(\frac{K}{\delta}+c\right)\int_{\Omega_\delta} |u-|u_p|^{p-1}u_p| dx.
\end{split}
\end{equation}
The Young inequality gives that
\begin{multline}
    \label{4.6}
\int_\Omega F(\nabla (|u_p|^{p-1}u_p))dx=\int_\Omega p|u_p|^{p-1}F(\nabla u_p)dx\\ \le \int_\Omega F^p(\nabla  u_p) dx+(p-1)\int_\Omega |u_p|^p dx.
\end{multline}
Furthermore by \eqref{diff_J}, \eqref{stima_A} and \eqref{4.6}, we have that
\begin{multline*}
J(u)-J_p(u_p)\leq 2 |Du|_F(\Omega_\delta) +|Du|_F(\Omega\setminus\Omega_\delta)-
\int_{\Omega\setminus\Omega_\delta} F(\nabla(|u_p|^{p-1}u_p))dx\\ +(p-1)\int_\Omega |u_p|^p dx
+\left(\frac{K}{\delta}+c\right)\int_{\Omega_\delta} |u-|u_p|^{p-1}u_p| dx.
\end{multline*}
Since $u_p$ converges to $u$ in $L^q(\Omega)$, then $|u_p|^{p-1}u_p$ converges to $u$ in $L^1(\Omega)$, as $p\to 1^+$. Hence, by taking $p\to 1^+$, we have
\[
\limsup_{p\to 1^+} \left[J(u)-J_p(u_p)\right]\leq 2 |Du|_F(\Omega_\delta).
\]
By sending $\delta\to 0^+$, we obtain \eqref{liminf_J}.

Finally, the inequality \eqref{limsup_J} is obtained as in the previous case.
\end{proof}
The proof of the $\Gamma$-convergence of the functional $J_p$ is useful to prove the convergence of the eigenvalues and eigenfunction, as $p\to 1^+$.
\begin{prop}
\label{convergence}
For any $\beta>-1$, it holds
\[
\lim_{p\to 1^+}\lambda_1(\Omega,p,\beta)=\Lambda(\Omega,\beta).
\]
Moreover, the minimizers $u_p\in W^{1,p}(\Omega)$ of \eqref{functional_Jp}, with $\|u\|_{L^p(\Omega)}=1$, weak$^*$ converge to a minimizer $u\in BV(\Omega)$ of \eqref{functional_J} as $p\to 1^+$.
\end{prop}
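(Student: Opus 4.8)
The plan is to obtain this statement as the standard consequence of the $\Gamma$-convergence proved in Theorem \ref{convergence_J}, together with an equicoercivity estimate guaranteeing compactness of the normalized minimizers. Throughout I would exploit that both $J_p$ and $J$ are $0$-homogeneous, so that minimization is scale-invariant and any convenient normalization may be imposed.

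\emph{Upper bound.} First I would fix an arbitrary $u\in BV(\Omega)$, $u\not\equiv 0$, and invoke the $\limsup$ part \eqref{limsup_J} of the $\Gamma$-convergence to produce a recovery sequence $u_p\in W^{1,p}(\Omega)$ with $u_p\overset{\ast}{\rightharpoonup}u$ and $\limsup_{p\to1^+}J_p(u_p)\le J(u)$. Since $\lambda_1(\Omega,p,\beta)\le J_p(u_p)$, this gives $\limsup_{p\to1^+}\lambda_1(\Omega,p,\beta)\le J(u)$, and taking the infimum over $u$ yields $\limsup_{p\to1^+}\lambda_1(\Omega,p,\beta)\le\Lambda(\Omega,\beta)$.

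\emph{Compactness.} Next I would take minimizers $u_p$ normalized by $\|u_p\|_{L^p(\Omega)}=1$, so that $J_p(u_p)=\lambda_1(\Omega,p,\beta)$ is bounded above for $p$ near $1$ by the previous step, and aim to bound $\{u_p\}$ in $BV(\Omega)$. For $\beta\ge0$ this is immediate, since the nonnegative boundary term makes $\int_\Omega F^p(\nabla u_p)\,dx$ bounded, hence $\int_\Omega F(\nabla u_p)\,dx$ bounded by Jensen's inequality, while $\|u_p\|_{L^1}\le|\Omega|^{(p-1)/p}$. For $-1<\beta<0$ I would first apply Young's inequality, as in the proof of Theorem \ref{convergence_J}, to replace $F^p$ and $|u_p|^p$ by $F$ and $|u_p|$ up to an $O(p-1)$ error, and then use the trace inequality \eqref{trace_inequality_1} to absorb the negative boundary term:
\[
\int_{\partial\Omega}|u_p|F(\nu)\,d\mathcal H^{N-1}\le|Du_p|_F(\Omega)+c\int_\Omega|u_p|\,dx.
\]
Because $|\beta|<1$, this produces a bound of the form $(1-|\beta|)|Du_p|_F(\Omega)\le\lambda_1(\Omega,p,\beta)+|\beta|c\|u_p\|_{L^1}+O(p-1)$, giving the desired $BV$ estimate. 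In every case $\{u_p\}$ is then bounded in $BV(\Omega)$, so along a subsequence $u_p\overset{\ast}{\rightharpoonup}u$ in $BV(\Omega)$ and $u_p\to u$ in $L^1(\Omega)$.

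\emph{Non-degeneracy and conclusion.} The delicate point, which I expect to be the main obstacle, is ruling out $u\equiv0$: the normalization is imposed in $L^p$ rather than $L^1$, so a priori the mass could escape by concentration. Here the uniform $BV$ bound is decisive through the embedding $BV(\Omega)\hookrightarrow L^{N/(N-1)}(\Omega)$, which makes $\|u_p\|_{L^{N/(N-1)}}$ uniformly bounded; for $p\in(1,N/(N-1))$ the interpolation inequality $1=\|u_p\|_{L^p}\le\|u_p\|_{L^1}^{\theta}\|u_p\|_{L^{N/(N-1)}}^{1-\theta}$ then forces $\|u_p\|_{L^1}\ge c>0$ uniformly, and passing to the $L^1$ limit gives $\|u\|_{L^1}\ge c$, so $u\not\equiv0$. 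Finally, applying the $\liminf$ inequality \eqref{liminf_J} to the weak$^*$ convergent sequence gives
\[
\liminf_{p\to1^+}\lambda_1(\Omega,p,\beta)=\liminf_{p\to1^+}J_p(u_p)\ge J(u)\ge\Lambda(\Omega,\beta).
\]
Combined with the upper bound, all inequalities collapse to equalities, so $\lim_{p\to1^+}\lambda_1(\Omega,p,\beta)=\Lambda(\Omega,\beta)$ and $J(u)=\Lambda(\Omega,\beta)$, i.e.\ the weak$^*$ limit $u$ is a minimizer of \eqref{functional_J}. I expect the main difficulty to lie precisely in this equicoercivity step: the constraint $|\beta|<1$ is what allows the trace inequality to leave a positive coefficient in front of $|Du_p|_F(\Omega)$ — which is exactly why the hypothesis $\beta>-1$ cannot be dropped — while the interpolation argument is indispensable because of the $p$-dependent $L^p$ normalization.
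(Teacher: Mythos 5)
Your overall architecture is the same as the paper's: the upper bound via the recovery sequences of Theorem \ref{convergence_J} (the paper tests the minimizers against a recovery sequence for a fixed minimizer $\bar u$ of \eqref{functional_J}, whose existence comes from Theorem \ref{Lambda=ell_thm}, rather than infimizing over all $u$ --- an inessential difference), then an equicoercivity bound in $BV(\Omega)$ split according to the sign of $\beta$, then conclusion via the $\liminf$ inequality \eqref{liminf_J}. Your non-degeneracy step is in fact an improvement: the paper passes from $u_p\overset{\ast}{\rightharpoonup}u$ to $\Lambda(\Omega,\beta)\le J(u)$ without ruling out $u\equiv 0$, which the $L^p$ normalization does not do automatically, and your interpolation of $\|u_p\|_{L^p}=1$ between $L^1$ and $L^{N/(N-1)}$ via the uniform $BV$ bound correctly fills this in.

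There is, however, a genuine directional error in your equicoercivity step for $-1<\beta<0$. You propose to first use Young's inequality to replace $|u_p|^p$ by $|u_p|$ on the boundary up to an $O(p-1)$ error, and only then apply \eqref{trace_inequality_1} to $u_p$ itself. For the gradient term the replacement is fine ($F^p\ge F-\tfrac{p-1}{p}$ pointwise), but since $\beta<0$, lower-bounding $\beta\int_{\partial\Omega}|u_p|^pF(\nu)\,d\mathcal H^{N-1}$ requires an \emph{upper} bound of $\int_{\partial\Omega}|u_p|^pF(\nu)\,d\mathcal H^{N-1}$ by $\int_{\partial\Omega}|u_p|F(\nu)\,d\mathcal H^{N-1}+O(p-1)$, and Young gives the opposite direction: $t^p\ge pt-(p-1)$, while $t^p-t=t(t^{p-1}-1)$ is unbounded in $t$ for each fixed $p>1$ (take $t=e^{1/(p-1)}$), and the traces of $u_p$ carry no uniform $L^\infty$ bound, so no pointwise $O(p-1)$ replacement is available. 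The repair is exactly the paper's maneuver: apply the trace inequality \eqref{trace_inequality_1} to $|u_p|^{p-1}u_p$ (equivalently $|u_p|^p$), which belongs to $W^{1,1}(\Omega)$, and then use the chain rule together with Young's inequality as in \eqref{4.6}:
\[
\int_{\partial\Omega}|u_p|^p F(\nu)\,d\mathcal H^{N-1}\le \int_\Omega p|u_p|^{p-1}F(\nabla u_p)\,dx+c\le \int_\Omega F^p(\nabla u_p)\,dx+(p-1)+c,
\]
using $\|u_p\|_{L^p(\Omega)}=1$. This yields $(1+\beta)\int_\Omega F^p(\nabla u_p)\,dx\le \lambda_1(\Omega,p,\beta)+|\beta|(p-1)+|\beta|c$, from which your claimed estimate $(1-|\beta|)\int_\Omega F(\nabla u_p)\,dx\le \lambda_1(\Omega,p,\beta)+|\beta|c+O(p-1)$ follows by the pointwise bound $F\le F^p+\tfrac{p-1}{p}$; with this substitution the remainder of your argument goes through unchanged.
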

\begin{proof}
The Theorem \ref{convergence_J} assures the existence of a sequence $w_p$ converging to a fixed minimizer $\bar u$ of \eqref{functional_J}. Let us consider the sequence of minimizers $u_p$ of \eqref{functional_J}, we have:
\begin{equation}
    \label{J_upper_bounded}
\limsup_{p\to 1^+} J_p(u_p)\leq \limsup_{p\to 1^+}J_p(w_p)\leq J(\bar u)=\Lambda(\Omega,\beta).
\end{equation}
This means that $J_p(u_p)$ is upper bounded for any $p>1$.
If $\beta<0$, by the trace inequality \eqref{trace_inequality_1}, we have that
\[
\int_\Omega F^p(\nabla u_p) dx\leq \Lambda(\Omega,\beta)-\beta|D(u_p^p)|_F(\Omega)-\beta c,
\]
and, by \eqref{4.6} and \eqref{eq:omo}, that
\[
(1+\beta)a^p\int_\Omega |\nabla u_p|^p dx\leq \Lambda(\Omega,\beta)-\beta c-\beta(p-1).
\]
Hence, by the compactness, we have that $u_p$ is upper bounded in $BV(\Omega)$. If $\beta \ge 0$, this directly follows  from \eqref{J_upper_bounded}.
Therefore $u_p$ weak$^*$ converges to $u$ in $BV(\Omega)$.

Finally, by \eqref{liminf_J} and \eqref{J_upper_bounded}, we have that
\[
\Lambda(\Omega,\beta)=J(\bar u)\leq J(u)\leq \liminf_{p\to 1^+}J_p(u_p)\leq\limsup_{p\to 1^+}J_p(u_p) \leq \Lambda(\Omega,\beta),
\]
and hence the conclusion by observing that $\lambda_1(\Omega,p,\beta)=J_p(u_p)$.
\end{proof}
\begin{comment}
\textcolor{red}{Non capisco bene a che serve questo remark}
Let us recall the definition of the first Cheeger constant \eqref{cheeger_def}: \[
h_1(\Omega)=\inf_{ D\subseteq \Omega} \frac{|\partial D|}{|D|};
\]
and of the first eigenvalue of the Dirichlet $p$-Laplace operator:
\begin{equation}
\label{eig_Dir}
\lambda^D_1(\Omega,p):=\inf_{w\in W^{1,p}(\Omega)}\frac{\ds\int_\Omega |\nabla w|^pdx}{\ds\int_\Omega |w|^p dx}.
\end{equation}
If $u$ is a minimizer of \eqref{eig_Dir}, then it solves
\[
\begin{cases}
-\mathcal{Q}_p u= \lambda^D_1(\Omega,p) |u|^{p-2}u\quad &\text{in}\ \Omega\\
u=0 &\text{on}\ \partial\Omega.
\end{cases}
\]
We highlight that, by using Proposition \ref{convergence}, it holds
\[
\lim_{\beta\to+\infty}\lim_{p\to 1^+}\lambda_1(\Omega,p,\beta)=\lim_{\beta\to+\infty}\Lambda(\Omega,\beta)=h_1(\Omega),
\]
where the last equality hold for every $\beta\ge 1$.
On the other hand
\[
\lim_{p\to 1^+}\lim_{\beta\to+\infty}\lambda_1(\Omega,p,\beta)=\lim_{p\to 1^+}\lambda_1^D(\Omega,p)=h(\Omega).
\]
where the last equality can be proved following line by line the proof of the result in Proposition \ref{convergence} (when considering $\beta=+\infty$).
\end{comment}

\subsection{An isoperimetric inequality}
\label{isoperimetric_subsec}
Here we treat the shape optimization problem for $\Lambda(\Omega,\beta)$. To this aim, we briefly recall the properties of the eigenvalue problem $\lambda_1(\mathcal W_R,p,\beta)$ and then we prove an explicit computation for $\Lambda(\mathcal W_R,\beta)$. By Theorem \ref{existence}, a minimizer of \eqref{lambda_p}
solves the following problem:
\begin{equation}
\label{anisotropic}
\begin{cases}
-\mathcal{Q}_p u=\lambda_1(\mathcal{W}_R,p,\beta)\abs{u}^{p-2}u & \textrm{in}\;\mathcal{W}_R\\
(F(\nabla u))^{p-1}F_{\xi}(\nabla u)\cdot \nu +\beta F(\nu)\abs{u}^{p-2}u=0 &\textrm{on}\;\partial\mathcal{W}_R.
\end{cases}
\end{equation}
In particular, the following result holds (refer to in \cite{fragavitone} for the positive values of the Robin parameter).
\begin{thm}
If $u_p \in C^{1,\alpha}(\mathcal W_R)\cap C(\overline{\mathcal W_R})$ is a positive solution of \eqref{anisotropic}, then there exists a monotone function $\varphi_p=\varphi_p(r)$, $r\in [0,R]$, such that $\varphi_p \in C^{\infty}(0,R)\cap C^1([0,R])$, and 
\begin{equation}
    \label{radial}
\begin{cases}
u_p(x)=\varphi_p(F^o(x)) \qquad\qquad \text{in }\overline{\mathcal{W}}_R\\
\varphi^{'}_p(0)=0\\
|\varphi^{'}_p(R)|^{p-2}\varphi'_p(R) +\beta\varphi_p(R)^{p-1}=0.
\end{cases}
\end{equation}
Moreover, $\varphi_p$ is decreasing if $\beta>0$, while is increasing if $\beta<0$.
\end{thm}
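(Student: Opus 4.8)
The plan is to reduce the fully anisotropic problem \eqref{anisotropic} to a one-dimensional radial problem by computing the action of $\mathcal Q_p$ and of the conormal boundary operator on \emph{anisotropically radial} functions, that is functions of the form $w(x)=\psi(F^o(x))$, and then to identify $u_p$ with such a radial solution using the simplicity of the first eigenvalue. The starting point is that $\nabla_x F^o(x)=F_\xi^o(x)$ and, by \eqref{FF0xi}, $F(F_\xi^o(x))=1$; hence, writing $r=F^o(x)$, for $w(x)=\psi(r)$ one has $\nabla w=\psi'(r)F_\xi^o(x)$ and $F(\nabla w)=|\psi'(r)|$.

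Using the $0$-homogeneity of $F_\xi$ together with the identity $F^o(x)F_\xi(F_\xi^o(x))=x$, I would first obtain
\[
F^{p-1}(\nabla w)\,F_\xi(\nabla w)=|\psi'(r)|^{p-2}\psi'(r)\,\frac{x}{r},
\]
and then, exploiting $F_\xi^o(x)\cdot x=F^o(x)=r$, compute the divergence to get
\[
\mathcal Q_p w=\frac{1}{r^{N-1}}\bigl(r^{N-1}|\psi'|^{p-2}\psi'\bigr)'.
\]
Thus $-\mathcal Q_p w=\lambda|w|^{p-2}w$ turns into the radial equation $-(r^{N-1}|\psi'|^{p-2}\psi')'=\lambda\,r^{N-1}|\psi|^{p-2}\psi$ on $(0,R)$. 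For the boundary operator, on $\partial\mathcal W_R$ the Euclidean normal is $\nu=F_\xi^o(x)/|F_\xi^o(x)|$, and from $x\cdot F_\xi^o(x)=R$ and $F(F_\xi^o(x))=1$ one checks $\frac{x}{R}\cdot\nu=F(\nu)=|F_\xi^o(x)|^{-1}$; substituting into the Robin condition and dividing by $F(\nu)>0$ gives precisely $|\psi'(R)|^{p-2}\psi'(R)+\beta\,\psi(R)^{p-1}=0$, while $C^1$ regularity of $w$ at the centre forces $\psi'(0)=0$.

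To close the argument I would produce a radial solution by minimising the reduced one-dimensional Rayleigh quotient: by the anisotropic coarea formula, with $|\{F^o<r\}|=\kappa_N r^N$ and $P_F(\mathcal W_R)=N\kappa_N R^{N-1}$, the functional $J_p$ restricted to radial competitors equals
\[
\frac{\int_0^R|\psi'|^p r^{N-1}\,dr+\beta\,|\psi(R)|^p R^{N-1}}{\int_0^R|\psi|^p r^{N-1}\,dr}.
\]
Its minimiser $\varphi_p$ is, after a sign choice, positive, belongs to $C^\infty(0,R)\cap C^1([0,R])$, and solves the radial equation with the two boundary conditions above. By the computation, $v(x)=\varphi_p(F^o(x))$ is then a positive solution of \eqref{anisotropic}; since a positive eigenfunction is necessarily a first eigenfunction and the first eigenvalue is simple, the given $u_p$ coincides with $v$ up to a positive constant, so $u_p(x)=\varphi_p(F^o(x))$. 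Finally, integrating the radial equation from $0$ with $\varphi_p'(0)=0$ yields
\[
r^{N-1}|\varphi_p'(r)|^{p-2}\varphi_p'(r)=-\lambda_1(\mathcal W_R,p,\beta)\int_0^r s^{N-1}\varphi_p(s)^{p-1}\,ds,
\]
so that $\varphi_p'$ has sign opposite to that of $\lambda_1(\mathcal W_R,p,\beta)$; since by Theorem \ref{existence} this eigenvalue is positive for $\beta>0$ and negative for $\beta<0$, the function $\varphi_p$ is decreasing if $\beta>0$ and increasing if $\beta<0$.

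The main obstacle is the rigidity step: the simplicity of the first eigenvalue and the fact that positive solutions of \eqref{anisotropic} are forced to be first eigenfunctions, which is what upgrades ``there exists a radial positive solution'' to ``the given $u_p$ is radial''. For the quasilinear Robin operator this rests on a Picone-type inequality valid for all $\beta>-1$, rather than on a plain maximum principle. A secondary point is the regularity of $\varphi_p$ at the degenerate centre $r=0$, which I would settle by combining interior smoothness for $r>0$ (where $\varphi_p'\neq0$) with the given $C^{1,\alpha}$ regularity of $u_p$, transferred to $\varphi_p$, forcing $\varphi_p'(0)=0$.
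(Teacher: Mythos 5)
Your proposal is correct, and it is worth noting that the paper itself gives no proof of this statement: the theorem is stated with a pointer to \cite{fragavitone} for $\beta>0$ (and the analysis for $\beta<0$ is imported from the literature, cf.\ \cite{paoli2019two,della2022sharp}); the only argument present in the source is a commented-out sketch that asserts simplicity of $\lambda_1(\mathcal W_R,p,\beta)$ together with the symmetry of the problem to obtain the radial form $u_p=\varphi_p(F^o)$, and then derives the monotonicity exactly as you do, by integrating the radial equation $-(r^{N-1}|\varphi_p'|^{p-2}\varphi_p')'=\lambda_1 r^{N-1}\varphi_p^{p-1}$ from $0$ with $\varphi_p'(0)=0$ and reading off the sign of $\varphi_p'$ from the sign of $\lambda_1$ (the sketch obtains $\lambda_1<0$ for $\beta<0$ by testing with constants, where you invoke Theorem \ref{existence}; same content). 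What your write-up adds beyond the paper's citation-level treatment is the actual anisotropic reduction: the computation $\mathcal Q_p\bigl(\psi(F^o)\bigr)=r^{1-N}\bigl(r^{N-1}|\psi'|^{p-2}\psi'\bigr)'$ via the identities $F(F^o_\xi)=1$ and $F^o(x)F_\xi(F^o_\xi(x))=x$, the verification that on $\partial\mathcal W_R$ one has $\tfrac{x}{R}\cdot\nu=F(\nu)=|F^o_\xi(x)|^{-1}$ so the conormal Robin condition collapses to $|\psi'(R)|^{p-2}\psi'(R)+\beta\psi(R)^{p-1}=0$, and the rigidity step (construct the radial minimizer of the reduced one-dimensional quotient, use a Picone-type argument to see that a positive eigenfunction is a first eigenfunction, then simplicity to identify $u_p$ with it); all of these checks are accurate as you state them, including the cancellation of the factor $N\kappa_N$ in the reduced Rayleigh quotient. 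The one external input your argument genuinely needs — simplicity of $\lambda_1$ and the boundary-term Picone inequality for the full range $\beta>-1$, not just $\beta>0$ — is exactly what the paper tacitly borrows from \cite{fragavitone} and \cite{della2022sharp}, and you flag it explicitly, which is the honest thing to do; with that input granted, your proof is complete, and your handling of the degenerate centre (smoothness on $(0,R)$ from $\varphi_p'\neq0$ there, forced by the integrated identity when $\lambda_1\neq0$, plus $C^1$ regularity of $u_p$ giving $\varphi_p'(0)=0$) is sound, with the trivial observation that for $\beta=0$ the eigenfunction is constant, consistent with the weak form of ``monotone'' in the statement.
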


We first compute $\Lambda(\mathcal W_R,\beta)$.
\begin{prop}\label{explicit_prop}
If $\beta>-1$, then 
\begin{equation}
\label{explicit}
\Lambda(\mathcal{W}_R,\beta)=\hat{\beta}h_F(\mathcal{W}_R)=\hat{\beta}\dfrac{N}{R},
\end{equation}
where $\hat{\beta}=\min\{\beta,1\}.$
\end{prop}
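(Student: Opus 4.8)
The plan is to use Theorem~\ref{Lambda=ell_thm} to replace the functional problem by the geometric one, namely $\Lambda(\mathcal W_R,\beta)=\ell(\mathcal W_R,\beta)=\inf_{E\subseteq\mathcal W_R}R(E,\beta)$, where I abbreviate $R(E,\beta)=\bigl(P_F(E;\mathcal W_R)+\hat\beta\,p_b(E)\bigr)/|E|$ and $p_b(E)=\int_{\partial\mathcal W_R\cap\partial^*E}F(\nu_E)\,d\mathcal H^{N-1}$ denotes the portion of the anisotropic perimeter of $E$ lying on $\partial\mathcal W_R$, with $\hat\beta=\min\{\beta,1\}$. I would then establish the matching upper and lower bounds $\ell(\mathcal W_R,\beta)=\hat\beta N/R$ separately.

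For the upper bound I would simply test the competitor $E=\mathcal W_R$. Since $\chi_{\mathcal W_R}$ is constant inside, $P_F(\mathcal W_R;\mathcal W_R)=0$, while its trace on $\partial\mathcal W_R$ equals $1$, so that $p_b(\mathcal W_R)=P_F(\mathcal W_R)=N\kappa_N R^{N-1}$; together with $|\mathcal W_R|=\kappa_N R^N$ this gives $R(\mathcal W_R,\beta)=\hat\beta N/R$, hence $\ell(\mathcal W_R,\beta)\le\hat\beta N/R$.

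The heart of the argument is the lower bound $R(E,\beta)\ge\hat\beta N/R$ for every $E\subseteq\mathcal W_R$ of finite perimeter, which I would obtain by a calibration with the field $X(x)=x$, for which $\dive X=N$. The Gauss--Green formula for sets of finite perimeter yields $N|E|=\int_{\partial^*E}x\cdot\nu_E\,d\mathcal H^{N-1}$. I would split $\partial^*E$ into its interior part (contained in $\mathcal W_R$) and its part on $\partial\mathcal W_R$, where $\nu_E=\nu$. On $\partial\mathcal W_R=\{F^o=R\}$ the crucial identity $x\cdot\nu=R\,F(\nu)$ holds, by Euler's identity $x\cdot\nabla F^o(x)=F^o(x)=R$ together with $F(\nabla F^o(x))=1$ from \eqref{FF0xi}; in the interior, since $F^o(x)\le R$, inequality \eqref{prodscal} gives $|x\cdot\nu_E|\le R\,F(\nu_E)$. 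Solving the split identity for $p_b(E)$ and substituting, the claim $P_F(E;\mathcal W_R)+\hat\beta p_b(E)-\hat\beta\tfrac N R|E|\ge0$ reduces to $\int_{\partial^*E\cap\mathcal W_R}\bigl(F(\nu_E)-\tfrac{\hat\beta}{R}x\cdot\nu_E\bigr)\,d\mathcal H^{N-1}\ge0$, whose integrand is bounded below by $(1-|\hat\beta|)F(\nu_E)\ge0$ since $|\hat\beta|\le1$ and $|x\cdot\nu_E|\le R\,F(\nu_E)$. This gives $\ell(\mathcal W_R,\beta)\ge\hat\beta N/R$, and combined with the upper bound, $\Lambda(\mathcal W_R,\beta)=\hat\beta N/R$.

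Finally, to recover the stated form, I would record that $h_F(\mathcal W_R)=N/R$: this is exactly the case $\hat\beta=1$ of the computation above, or equivalently a direct consequence of the anisotropic isoperimetric inequality \eqref{wulffinequality}, so that $\hat\beta N/R=\hat\beta h_F(\mathcal W_R)$. The step I expect to require the most care is the calibration: justifying the decomposition of the reduced boundary with $\nu_E=\nu$ on $\partial^*E\cap\partial\mathcal W_R$ and the applicability of the divergence theorem (which rest on the smoothness and strong convexity of $\mathcal W_R$), and carrying the sign of $\hat\beta$ through a single computation valid simultaneously for $\beta\ge1$, $0\le\beta<1$ and $-1<\beta<0$.
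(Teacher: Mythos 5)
Your proof is correct, but it takes a genuinely different route from the paper's, most notably in the range $-1<\beta<0$. Both arguments begin the same way, invoking Theorem \ref{Lambda=ell_thm} to reduce to $\ell(\mathcal W_R,\beta)=\inf_{E\subseteq\mathcal W_R}R(E,\beta)$ and testing $E=\mathcal W_R$ for the upper bound. For the lower bound the paper splits according to the sign of $\beta$: when $\beta\ge 0$ it uses $\hat\beta\in[0,1]$ to absorb the interior perimeter, $P_F(E;\mathcal W_R)+\hat\beta\int_{\partial\mathcal W_R\cap\partial^*E}F(\nu)\,d\mathcal H^{N-1}\ge\hat\beta P_F(E)$, and then concludes by the anisotropic isoperimetric inequality \eqref{wulffinequality} and the monotonicity of $r\mapsto P_F(\mathcal W_r)/|\mathcal W_r|=N/r$; when $-1<\beta<0$ this absorption fails (multiplying by a negative $\hat\beta$ reverses the inequality), and the paper instead runs a much heavier argument: it takes the radial, radially increasing eigenfunctions $\varphi_p$ of \eqref{radial}, passes to the limit via the $\Gamma$-convergence result (Proposition \ref{convergence}), deduces that the limit minimizer is radially nondecreasing so that an optimal superlevel set is an anisotropic annulus $\{r<F^o(x)<R\}$, and then minimizes the one-dimensional quotient $f(t)=(t^{N-1}+\beta)/(1-t^N)$, attained at $t=0$. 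Your calibration with $X(x)=x$ replaces all of this by a single computation valid simultaneously for every $|\hat\beta|\le 1$: the identity $x\cdot\nu=R\,F(\nu)$ on $\{F^o=R\}$ (Euler's theorem plus \eqref{FF0xi}) and the bound $|x\cdot\nu_E|\le F^o(x)F(\nu_E)\le R\,F(\nu_E)$ from \eqref{prodscal} make the integrand $F(\nu_E)-\tfrac{\hat\beta}{R}\,x\cdot\nu_E$ nonnegative on $\partial^*E\cap\mathcal W_R$, which is exactly $R(E,\beta)\ge\hat\beta N/R$. What your approach buys: it is purely geometric and self-contained, needing neither the $\Gamma$-convergence machinery nor the radial structure of the $p$-eigenfunctions nor even \eqref{wulffinequality} (your boundary identity also yields $P_F(\mathcal W_R)=N\kappa_N R^{N-1}$ for free), and it degenerates gracefully at $\beta=-1$, where it still gives the lower bound $-N/R$ even though minimizers may fail to exist. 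What the paper's route buys: in the negative range it exhibits the structure of near-optimal sets (annuli, with the whole Wulff shape optimal), information your pointwise inequality carries only implicitly through its equality cases; it also illustrates the use of the convergence theory that is the paper's main theme. The measure-theoretic points you flag, Gauss--Green on sets of finite perimeter and $\nu_E=\nu$ for $\mathcal H^{N-1}$-a.e.\ point of $\partial^*E\cap\partial\mathcal W_R$ when $E\subseteq\mathcal W_R$, are standard (De Giorgi--Federer and a blow-up argument using $E\subseteq\mathcal W_R$ with $\partial\mathcal W_R$ of class $C^2$), so no genuine gap remains.
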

\begin{proof}
If $\beta\geq 0$, we recall that 
\[\Lambda(\mathcal{W}_R,\beta)=\inf_{E\subseteq \mathcal{W}_R}J(\chi_E).
\]
By using Theorem \ref{Lambda=ell_thm} and the isoperimetric inequality, we have 
\begin{align*}
R(E,\beta)&=J(\chi_E)\\
&=\dfrac{P_F(E,\mathcal{W}_R)+\hat{\beta}\ds\int_{\partial \mathcal{W}_R\cap\partial E}F(\nu_E)d\mathcal{H}^{N-1}}{\abs{E}}\\
&\geq\hat{\beta}\dfrac{P_F(E)}{\abs{E}}\geq\hat{\beta}\dfrac{P_F(\mathcal{W}_r)}{\abs{\mathcal{W}_r}}\geq\hat{\beta}\dfrac{P_F(\mathcal{W}_R)}{\abs{\mathcal{W}_R}}\\&=\hat{\beta}\dfrac{N}{R},
\end{align*}
where $\mathcal{W}_r$ is the wulff shape of radius $r<R$, with $\abs{\mathcal{W}_r}=\abs{E}$.

This proves $\Lambda(\mathcal{W}_R,\beta)\ge\hat{\beta}\frac{N}{R}$. For the reverse inequality we take $E=\mathcal{W}_R$, hence
\[
\Lambda(\mathcal{W}_R,\beta)=\ell(\mathcal{W}_R,\beta)\leq R(\mathcal{W}_R,\beta)=\hat{\beta}\dfrac{N}{R}.
\]
Now, we study the case $-1<\beta<0$ and we will make use of the $\Gamma$-convergence.\\ Hence, let $u_p\in W^{1,p}(\mathcal{W}_R)$ a minimizer of (\ref{lambda_p}). We know, thanks to the Proposition \ref{convergence}, that 
\[
\lim_{p\rightarrow1^{+}}\lambda_{1}(\mathcal{W}_R,p,\beta)=\Lambda(\mathcal{W}_R,\beta)
\]
and we take $u_p=\varphi_p$ as in (\ref{radial}). So, the minimizer converges strongly in $L^1(\mathcal{W}_R)$ to $u\in BV(\mathcal{W}_R)$, almost everywhere in $\mathcal{W}_R$ and $u_p\overset{\ast}{\rightharpoonup} u \;\textrm{in}\;BV(\Omega)$ for $p\rightarrow 1^+$. Moreover, $u_p$ is radially increasing, hence $u$ is nondecreasing and this implies that its superlevel sets $\{u>t\}$ are concentric Wulff shapes $\{r<F^o(x)<R\}$ and, by Theorem \ref{Lambda=ell_thm}, it holds that 
\[
\Lambda(\mathcal{W}_R,\beta)=\dfrac{N}{R}\dfrac{(\frac{r}{R})^{N-1}+\beta}{1-(\frac{r}{R})^{N-1}}
\]
for some $r\in [0,R[$. Therefore, by minimizing the function 
\[
f(t)=\dfrac{t^{N-1}+\beta}{1-t^N}\qquad t\in [0,1[,
\]
we observe that the minimum is attained at $t=0$. Hence, the thesis follows.
\end{proof}
Finally, we prove an isoperimetric inequality for $\Lambda(\Omega,\beta)$ when a volume constraint holds: if $\beta\geq 0$, the Wulff shape is a minimizer and, if $\beta<0$, it is a maximizer for $\Lambda(\Omega,\beta)$.
\begin{prop}
If $\beta\geq 0$ and $\mathcal{W}_R$ is the wulff shape of radius $R$ and $\abs{\mathcal{W}_R}=\abs{\Omega}$, then 
\[
\Lambda(\mathcal{W}_R,\beta)\leq \Lambda(\Omega,\beta).
\]
If $-1<\beta<0$, then 
\[
\Lambda(\mathcal{W}_R,\beta)\geq \Lambda(\Omega,\beta).
\]
\end{prop}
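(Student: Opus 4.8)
The plan is to combine the reduction to characteristic functions from Theorem \ref{Lambda=ell_thm} with the explicit value of $\Lambda(\mathcal W_R,\beta)$ computed in Proposition \ref{explicit_prop} and the anisotropic isoperimetric inequality \eqref{wulffinequality}. By Theorem \ref{Lambda=ell_thm} we may write
\[
\Lambda(\Omega,\beta)=\ell(\Omega,\beta)=\inf_{E\subseteq\Omega} R(E,\beta),\qquad R(E,\beta)=\frac{P_F(E;\Omega)+\hat\beta\ds\int_{\de\Omega\cap\de^*E}F(\nu_E)\,d\mathcal H^{N-1}}{\abs{E}},
\]
with $\hat\beta=\min\{1,\beta\}$, while Proposition \ref{explicit_prop} gives $\Lambda(\mathcal W_R,\beta)=\hat\beta\,N/R$. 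The bridge between the two will be the jump formula \eqref{salto_variazione} applied to $u=\chi_E$, which yields $P_F(E;\R^N)=P_F(E;\Omega)+\int_{\de\Omega\cap\de^*E}F(\nu_E)\,d\mathcal H^{N-1}$, relating the relative numerator of $R$ to the full anisotropic perimeter of $E$.

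For the case $\beta\ge 0$ I would estimate $R(E,\beta)$ from below for an arbitrary competitor $E\subseteq\Omega$. Since $\hat\beta\le 1$ we have $P_F(E;\Omega)\ge\hat\beta\,P_F(E;\Omega)$, so by the bridge formula the numerator of $R(E,\beta)$ dominates $\hat\beta\,P_F(E;\R^N)$; hence $R(E,\beta)\ge\hat\beta\,P_F(E)/\abs{E}$. Letting $\mathcal W_r$ be the Wulff shape with $\abs{\mathcal W_r}=\abs{E}$, the isoperimetric inequality \eqref{wulffinequality} gives $P_F(E)/\abs{E}\ge P_F(\mathcal W_r)/\abs{\mathcal W_r}=N/r$, and because $\abs{E}\le\abs{\Omega}=\abs{\mathcal W_R}$ forces $r\le R$, we conclude $R(E,\beta)\ge\hat\beta\,N/R=\Lambda(\mathcal W_R,\beta)$. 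Taking the infimum over $E$ delivers $\Lambda(\mathcal W_R,\beta)\le\Lambda(\Omega,\beta)$.

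For the case $-1<\beta<0$, where $\hat\beta=\beta<0$ and the Wulff shape is expected to be a maximizer, the right move is to test $\ell(\Omega,\beta)$ with the whole domain $E=\Omega$. Since $\chi_\Omega$ is constant inside $\Omega$ one has $P_F(\Omega;\Omega)=0$ and $\int_{\de\Omega}F(\nu)\,d\mathcal H^{N-1}=P_F(\Omega)$, whence $R(\Omega,\beta)=\beta\,P_F(\Omega)/\abs{\Omega}$. Now the sign of $\beta$ reverses the isoperimetric inequality: from $P_F(\Omega)\ge P_F(\mathcal W_R)$ with $\abs{\Omega}=\abs{\mathcal W_R}$ and $\beta<0$ one gets $\beta\,P_F(\Omega)/\abs{\Omega}\le\beta\,P_F(\mathcal W_R)/\abs{\mathcal W_R}=\beta\,N/R=\Lambda(\mathcal W_R,\beta)$. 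Therefore $\Lambda(\Omega,\beta)\le R(\Omega,\beta)\le\Lambda(\mathcal W_R,\beta)$, which is the claimed reverse inequality.

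I expect the only delicate points to be bookkeeping rather than genuine obstacles. In the case $\beta\ge0$ one must keep track of the two regimes $\beta\ge1$ (where $\hat\beta=1$ and the numerator is exactly $P_F(E;\R^N)$) and $0\le\beta<1$ (where the factor $\hat\beta$ must be extracted before applying the isoperimetric inequality), together with the monotonicity $r\le R\Rightarrow N/r\ge N/R$. The one conceptual subtlety, in the case $-1<\beta<0$, is recognizing that testing with $E=\Omega$ is optimal and that the sign of $\beta$ flips the direction of \eqref{wulffinequality}; once this is seen the argument is immediate and needs no compactness or $\Gamma$-convergence input.
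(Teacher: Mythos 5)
Your proposal is correct and follows essentially the same route as the paper: for $\beta\ge0$ the paper invokes exactly the estimate you give, $R(E,\beta)\ge\hat\beta\,P_F(E)/\abs{E}\ge\hat\beta\,N/R$ (extracting $\hat\beta\le1$, using the jump formula \eqref{salto_variazione} and the isoperimetric inequality \eqref{wulffinequality} together with $r\le R$, i.e.\ the ``same argument as Proposition~\ref{explicit_prop}''), and then passes to the infimum over $E\subseteq\Omega$ via Theorem~\ref{Lambda=ell_thm}; for $-1<\beta<0$ it likewise tests with $E=\Omega$, getting $\Lambda(\Omega,\beta)\le\beta\,P_F(\Omega)/\abs{\Omega}$, and uses the sign of $\beta$ to reverse \eqref{wulffinequality}. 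There is nothing to correct.
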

\begin{proof}
If $\beta \geq 0$, by using the same argument of the Proposition \ref{explicit_prop}, we have that then
\[
R(E,\beta)=J(\chi_E)\geq\hat{\beta}\dfrac{P_F(\mathcal{W}_R)}{\abs{\mathcal{W}_R}}=\hat{\beta}\dfrac{N}{R}=\Lambda(\mathcal{W}_R,\beta),
\]
for any $E\subseteq\Omega$. The conclusion follows by passing to the infimum on the set $E\subseteq\Omega$ and using Theorem \ref{Lambda=ell_thm}.

If $-1<\beta <0$, then using the  isoperimetric inequality \cite{fonseca1991uniqueness} and \eqref{explicit}, we have that 
\[
\Lambda(\Omega,\beta)\leq \beta\dfrac{P_F(\Omega)}{\abs{\Omega}}\leq \beta \dfrac{P_F(\mathcal{W}_R)}{\abs{\mathcal{W}_R}}=\Lambda(\mathcal{W}_R,\beta).
\]
\end{proof}

\section*{Acknowledgement}

This work has been partially supported by the  MIUR-PRIN 2017 grant ``Qualitative and quantitative aspects of nonlinear PDE's'', by GNAMPA of INdAM, by  the FRA Project (Compagnia di San Paolo and Universit\`a degli studi di Napoli Federico II) \verb|000022--ALTRI_CDA_75_2021_FRA_PASSARELLI|.

%
%\addcontentsline{toc}{chapter}{Bibliografia}
\bibliographystyle{abbrv}

\end{document}